\title{The $3n+1$ problem: a partition of interest}
\author{Maarten Wensink \\ maartenwensink@hotmail.com}
\date{\today}
\begin{document}
\theoremstyle{plain}
\newtheorem{theorem}{Theorem}
\newtheorem{corollary}[theorem]{Corollary}
\newtheorem{lemma}[theorem]{Lemma}
\newtheorem{proposition}[theorem]{Proposition}
\theoremstyle{definition}
\newtheorem{definition}[theorem]{Definition}
\newtheorem{example}[theorem]{Example}
\newtheorem{conjecture}[theorem]{Conjecture}
\theoremstyle{remark}
\newtheorem*{remark}{Remark}

\maketitle
\newcommand{\N}{\mathbb{N}}
\newcommand{\0}{\mathbb{N}_0}
\newcommand{\D}{\mathbb{O}}

\tableofcontents

\section{Abstract}
A mapping conjugate to the Collatz mapping seems to imply that $\N=\{1,2,3,\ldots\}$ is partitioned in a trivial loop $\{1\}$ and `strings' that are ordered subsets of $\{\N \setminus 1\}$ that run from an element of $\{2+3\0\}$ to an element of $\{3+4\0\}$ ($\0=0 \cup \N$). In particular, this means that all trajectories except for the trivial loop go through an element of $\{3+4\0\}$ ($\{5+8\0\}$ for the original mapping). I give reasons for this conjecture. Next, I note that the 3n+1 numbers and the 3n+3 numbers are the only numbers from the generalization $3n+p, p \in \{\ldots,-3,-1,1,3,\ldots\}$ for which such a partition seems to exist. Suspiciously, these are also the only members for which the conjecture (reduction to the trivial loop) seems to hold.

\section{Preliminary analysis}
The Collatz conjecture / 3n+1 problem posits that recursive application of the mapping
\begin{equation}
\label{Collatz}
	C(n):= 	\begin{cases}
					n/2 &\mbox{if n is even}; \\ 
					3n+1 & \mbox{if n is odd}, 
			\end{cases} 
\end{equation}
\newline
on any natural number $n \in \N$, $\N=1,2,3,\ldots$, eventually leads to 1, after which the cycle $\{4,2,1\}$ is repeated indefinitely (Pickover 2009, Lagarias 2010). The $3n+1$ problem has mystified mathematicians for decades. If true, how could such a simple rule reduce all natural numbers to 1? 

Instead of the Collatz map (equation (\ref{Collatz})) we might study the accelerated Collatz map that sends odd positive integers to odd positive integers:
\begin{equation}
\widetilde{C}(n):=\frac{3n+1}{2^j},
\label{AccelCollatz}
\end{equation}
where $2^j$ is the largest power of 2 that divides $3n+1$, with $n \in \D$, where $\D=\{1,3,5,\ldots\}$, the odd positive integers. We might enumerate the odd positive integers, which gives just $\N$, such that the $i^th$ element of $\N$ corresponds to the $i^{th}$ element of $\D$. Thus, the odd positive integers are enumerated: $g(1)=1$, $g(3)=2$, $g(5)=3$, et cetera. For efficient reference and to avoid confusion, I put these enumerated positive integers between brackets: $g(1)=[1]$, $g(3)=[2]$, $g(5)=[3]$, et cetera. When an entire equation is put between brackets, the entire equation refers to the enumerated positive integer space. In addition, I use $x$ to refer to enumerated positive integers, whereas $n$ refers to those positive integers themselves: $g(n)=[x]$.

$\D$ can be transformed to $\N$ by $g: \D \rightarrow \N$ such that 
\begin{equation}
g(n):=\frac{n+1}{2}.
\label{g}
\end{equation}
The inverse of $g$ is $g^{-1}: \N \rightarrow \D$ such that
\begin{equation}
g^{-1}([x])=2[x]-1=n.
\label{g-1}
\end{equation}

Conjugating $\widetilde{C}$ through $g$ yields mapping
\begin{equation}
F([x])=g(\widetilde{C}(g^{-1}([x]))),
\end{equation}
which is $F:\N \rightarrow 1+3\0 \cup 3+3\0$, such that
\begin{equation}
\begin{cases}
\label{F}
F(E^n([2+2m]))=[3+3m]|(m,n) \in \0^2, \\
F(E^n([1+4m]))=[1+3m]|(m,n) \in \0^2,
\end{cases}
\end{equation}
with $E: [\N \rightarrow 3+4\0]$ (explanation below) such that
\begin{eqnarray}
\label{E}
E([x]):&=&4[x]-1, \\
\label{E0}
E^0([x]):&=& [x], \\
\label{En}
E^n([x]):&=&E(E^{n-1}([x])).
\end{eqnarray}

For completeness, I prove $F$, including its domain ($[\N]$) and range ($[1+3\0 \cup 3+3\0]$), in the Appendix. The steps involved are straightforward and mimic known results for the original Collatz map. See Wirsching 1998.

\begin{remark}
Notice that the trivial cycle $\{4,2,1\}$ now becomes the trivial loop [1], since 4 and 2 are even and 1 is the first odd natural number.
\end{remark}

It is essential to thoroughly understand $E$. $E$ has the property that
\begin{equation}
F(E^j([x]))=F(E^k([x])) \; \forall (j,k) \in \0^2, \; [x \in \N]
\end{equation}
Thus, $E$ indicates which members of $[\N]$ have the same image as $[x \in \N]$ under $\widetilde{C}$ and are therefore `equivalent'; $E$ is not itself the mapping under $\widetilde{C}$. It presents itself as a function, because which elements of $[\N]$ are equivalent to $[x]$ clearly depends on $[x]$. The domain of $E$ is $[\N]$, while its range is $[E(\N)=3+4\0]$. Hence, every $[x \in \N]$ has infinitely many higher equivalents, whereas members of $[3+4\0]$ have at least one lower equivalent. Similarly, $[E(3+4\0)=11+16\0]$ have at least two lower equivalents, and so forth. Phrases like ``taking equivalents" mean ``applying $E$". Equivalents could be taken of a single $[x]$, forming $[E(x)]$, then $[E(E(x))=E^2(x)]$, and so forth.

Equivalents could also be taken of a set. For instance, the set $[2+2\0]$ has equivalents $[E(2+2\0)=7+8\0]$, $[E^2(2+2\0)=E(7+8\0)=27+32\0]$ et cetera, while the set $[1+4\0]$ has equivalents $E[(1+4\0)=3+16\0]$, $[E^2(1+4\0)=E(3+16\0)=11+64\0]$ et cetera. Indeed, the domain of $F$ as presented in equation (\ref{F}) follows from taking these equivalents of sets. It is a partition of $[\N]$ that consists of two collections of equivalent sets: those that map to $[3+3\0]$ are equivalents of $[2+2\0]$, while those that map to $[1+3\0]$ are equivalents of $[1+4\0]$. The union of all those equivalents of $[2+2\0]$ and $[1+4\0]$, including $[2+2\0]$ and $[1+4\0]$ themselves, is $[\N]$. Thus, $F$ suggests to write $[\N]$ as $[2+2\0 \cup 1+4\0 \cup E(2+2\0) \cup E(1+4\0) \cup E^2(2+2\0) \cup E^2(1+4\0) \cup \ldots = 2+2\0 \cup 1+4\0 \cup 7+8\0 \cup 3+16\0 \cup 27+32\0 \cup 11+64\0 \cup \ldots]$, as this is the way the domain of $F$, which is $[\N]$, presents itself (Appendix).

We could consider restrictions of $F$ that each pertain to one of these subsets of $[\N]$ into which the domain of $F$ is naturally divided. This gives $F|_{[2+2\0]}$ being the part of $F$ that maps $[2+2\0]$ (i.e., $[2+2\0]$ is its domain) to $[3+3\0]$ (i.e., $[3+3\0]$ is its range), $F|_{[1+4\0]}$ being the part of $F$ that maps $[1+4\0]$ to $[1+3\0]$, $F|_{E([2+2\0])}=F|_{[7+8\0]}$ being the part of $F$ that maps $[7+8\0]$ to $[3+3\0]$, $F|_{E([1+4\0])}=F|_{[3+16\0]}$ being the part of $F$ that maps $[3+16\0]$ to $[1+3\0]$, et cetera. Conform with the fact that natural numbers are distributed uniformly modulus 2, we find that of any and all two consecutive elements of $[\N]$, exactly one is mapped through $F|_{[2+2\0]}$, that of any and all four consecutive elements of $[\N]$, exactly one is mapped through $F|_{[1+4\0]}$, that of any and all eight consecutive elements of $[\N]$, exactly one is mapped through $F|_{[7+8\0]}$, and so forth. The distance in $[\N]$ between any two consecutive elements of $[\N]$ mapped through $F|_{[2+2\0]}$ is $2^1$, the distance in $[\N]$ between any two consecutive elements of $[\N]$ mapped through $F|_{[1+4\0]}$ is $2^2$, while the distance in $[\N]$ between any two consecutive elements of $[\N]$ mapped through $F|_{[7+8\0]}$ is $2^3$, and so forth. These distances, which I call `intervals', are all powers of 2. I use these powers of 2 to refer to the restrictions of $F$ introduced above: $F_1:=F|_{[2+2\0]}$, $F_2:=F|_{[1+4\0]}$, $F_3:=F|_{[7+8\0]}$, and so forth. Just like I use $[x]$ to refer to any element of $[\N]$ without singling out any element of $[\N]$ in particular, so I use $F_z$, $z \in \N$ to refer to any of these restrictions, without specifying which: if I specify $z=1$, I refer to $F_1=F|_{[2+2\0]}$, if I specify $z=2$ I refer to $F_2=F|_{[1+4\0]}$, and so forth. Consecutive elements of $[\N]$ that are mapped through some $F_z$ are found in $[\N]$ at intervals of $2^z$, and of any and all $2^z$ consecutive  elements of $[\N]$, exactly one is mapped through $F_z$.

The property that of any and all $2^z$ consecutive elements, exactly one is mapped through $F_z$ is maintained over an arbitrarily large number of consecutive mappings. For instance, $[2 \mapsto 3]$ through $F_1$, after which $[3 \mapsto 1]$ through $F_4$. This permutation of mappings through restrictions of $F$, first $F_1$ and then $F_4$, occurs at but not before $[2+2^{1+4}=34]$, and indeed $[2+32\0]$ are those elements of $[\N]$ that are mapped through first $F_1$ and then $F_4$. I define this property as $z$-proportionality:
\begin{definition}
\label{Z-prop}
$z$-proportionality: Let the successive restrictions $F_z$ through which some $[x \in \N]$ maps be indexed $i=1,2,3,\ldots$, so that some $[x \in \N]$ is mapped, successively, through $F_{z_1},F_{z_2},\ldots,F_{z_n}$ (in that order). Then this same permutation of mappings through $F_{z_1},F_{z_2},\ldots,F_{z_n}$ occurs at but not before $[x+2^{\sum_{i=1}^n z_i}]$. Indeed, of any and all $2^{\sum_{i=1}^n z_i}$ consecutive elements of $[\N]$, exactly one is mapped through, successively, $F_{z_1}, F_{z_2},\ldots,F_{z_n}$.
\end{definition}
Let sets that have this property, including $[\N]$, be called $z$-proportional. To see why $[\N]$ is $z$-proportional, consider the subset of $[\N]$ that is mapped through some $F_z$. This subset of $[\N]$ is mapped through $F_z$ to either $[1+3\0]$ or $[3+3\0]$. The interval at which elements of these subsets are found in $[\N]$ is 3, which is co-prime with the interval at which elements mapped through any restriction $F_z$ are found in $[\N]$, these intervals being $2^z$. Therefore also in $[1+3\0]$ or $[3+3\0]$, it is true that of any and all $2^z$ consecutive elements, exactly one is mapped through $F_z$ (conform with Lemma 1, Appendix A1). For any restriction $F_z$, these elements are found in $[\N]$ at intervals of $3 \cdot 2^z$. The mapping through $F_z$ then divides $2^z$ out of the interval and multiplies by 3, yielding an interval of 9, co-prime with the intervals $2^z$ at which elements mapped through any restriction $F_z$ are found in $[\N]$, and so on. 

If the Collatz conjecture is true, then $F$ organizes $[\N]$ in a tree rooted in [1]. A natural way to proceed is then to start with [1], and verify which elements of $[\N]$ map to it. Since $[1 \mapsto 1]$, $[E^{\0}(1) \mapsto 1]$. Next, we could verify which elements map to $[E^{\0}(1)]$. For instance, since $[E^1(1)=3]$, and $[2 \mapsto 3]$, we find that $[E^{\0}(2) \mapsto 3]$. We could then ask which elements map to $[E^{\0}(2)]$, and so forth.

This algorithm suggests splitting up $F$ in two parts: $F_l: [2+2\0 \cup 1+4\0 \rightarrow 3+3\0 \cup 1+3\0]$, such that
\begin{equation}
\label{Fl}
F_l([2+2m]):=[3+3m]|m \in \0, \; F_l([1+4m]):=[1+3m]|m \in \0;
\end{equation}
the part of $F$ that does not involve mappings of elements that have a lower equivalent, which is one-to-one, versus the part of $F$ that involves taking equivalents. Thus, $F_l=F|_{[2+2\0 \cup 1+4\0]}$ consists of $F_1=F|_{[2+2\0]}$ and $F_2=F|_{[1+4\0]}$ (it could be said that $l=1,2$, although the subscript $l$ is really just an indication to refer to the part of $F$ that maps elements of $[\N]$ that have no lower equivalent; $l$ stands for `lower part'). The other restrictions of $F$, $F_z, \; z>2$, have a domain that is derived from the domain of either $F_1$ or $F_2$ through application of $E$. Indeed, $[E^{\N}(2+2\0 \cup 1+4\0)=3+4\0]$.

To verify the elements that are included in the tree rooted in [1] (or in any other root, for that matter), start with the root, take the equivalents of the elements in the root, then take the elements of $[\N]$ that map to those equivalents through $F_l$, then take the equivalents of those elements, and so forth. The result of this algorithm seems to partition $[\N]$ in trivial loop [1] and `strings', ordered subsets of $[\N]$ that run from an element of $[2+3\0]$ to an element of $[3+4\0]$ through mapping $F_l$ only. An example of a string is $[5,4,6,9,7]$: $[5 \in 2+3\0]$, $[7 \in 3+4\0]$, with [5], [4], [6], and [9] in the domain of $F_l$. To see how this partition arises, notice that taking equivalents of $[x]$ means finding those elements of $[3+4\0]$ that have the same image as $[x]$. The elements that map to those equivalents are those that map there through $F_l$, and the equivalents of these elements. Thus the question follows whether recursive application of $F_l$ on $[2+3\0]$, which is not in the range of $F_l$, always leads to one of these equivalents (Figure 1).

As the algorithm above demonstrates, strings and strings only are included in trees. Hence, any element of $[\N]$ that is not in a string, is in a root. Importantly, a hypothetical finding that not all elements of $[\N]$ are in strings would therefore disprove the conjecture. Reversely, a finding that under $F$, $[\N]$ is partitioned in the trivial loop $[1]$ and strings would be in accordance with the conjecture. Moreover, the string partition means that all trajectories go through at least one element of $[3+4\0]$ (for odd natural numbers this would be $5+8\0$), which is interesting in its own right.

\begin{figure}[h]
     \centering
     \includegraphics[width=\textwidth]{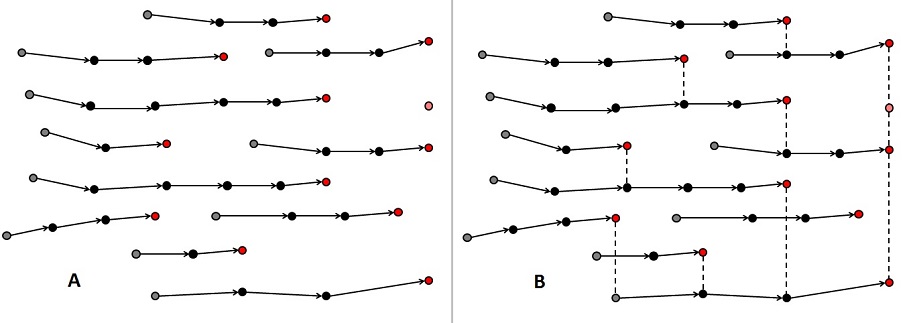}
     \caption{\small{\textbf{A.} $[\N \setminus 1]$ is partitioned in strings. Elements of $[3+4\0]$, which have a first lower equivalent, are colored red. Elements of $[2+3\0]$, that are not in the domain of $F_l^{-1}$, are partially transparent. Internal vertices are just black. The cardinality of the strings differs. One string has only one element, which is true for $[\{2+3\0\}\cap\{3+4\0\}]$, and is therefore transparent red. Other strings have various cardinality. The representation is schematic: in reality, strings cross, but do not intersect. For instance, the strings $\{[8,12,18,27]\}$ and $\{[17,13,10,15]\}$ (both not necessarily depicted here) cross, since the former begins lower and ends higher than all elements of the latter. Yet, they do not have elements in common: they do not intersect. \newline \textbf{B.} The dashed lines relate nodes to their first higher equivalent, if it is depicted. Notice how $[3+4\0]$ may be the first higher equivalent of any element of another string. Notice also how strings act as collectors for other strings: all except one of the strings depicted here, composed of a total of 39 nodes, drain into the lower string, which in turn will be collected by yet another string or by the trivial loop. Finally, notice how the bottom right node, itself in $[3+4\0]$, has its first, second, and third higher equivalent connected to it. In fact, every node has infinitely many higher equivalents.}}
\end{figure}

\section{Why we might think that there is a string partition}
The goal is to show that $[\N \setminus 1]$ is partitioned in strings. Since $[2+3\0]$ is not in the range of $F_l$, it should be shown that recursive application of $F_l$ on $[2+3\0]$ all of $[\N \setminus 1 \setminus 2+3\0=3+3\0 \cup 4+3\0]$ is hit: first $F_l$ is applied on $[2+3\0]$, then on the image of $[2+3\0]$ when mapped through $F_l$, then on the image of the image of $[2+3\0]$ and so forth indefinitely. Notice that:
\begin{enumerate}
	\item When an element of $[3+4\0]$ is hit, a string ends, as $[3+4\0]$ is not in the domain of $F_l$: these elements generate no image in the next iteration.
	\item All elements hit through this procedure are indeed in $[3+3\0 \cup 4+3\0]$, since this is the range of $F_l$ if $[1 \mapsto 1]$ is ignored.
	\item All elements of $[\N]$ that are hit through this procedure, are hit exactly once, since $F_l$ is one-to-one and $[2+3\0]$ (the starting point) is not in the range of $F_l$.
\end{enumerate}
It remains to be shown that \emph{all} of $[3+3\0 \cup 4+3\0]$ is indeed hit. 

Similarly, the inverse of $F_l$, $F_l^{-1}: [3+3\0 \cup 1+3\0 \rightarrow 2+2\0 \cup 1+4\0]$, such that
\begin{equation}
\label{FlInv}
F_l^{-1}([3+3m])=[2+2m]|m \in \0, \; F_l^{-1}([1+3m])=[1+4m]|m \in \0,
\end{equation}
could be applied recursively on $[3+4\0]$: first $F_l^{-1}$ is applied on $[3+4\0]$, then on the image of $[3+4\0]$ when mapped through $F_l^{-1}$, then on the image of the image of $[3+4\0]$ and so forth indefinitely. Does this include all of $[\N \setminus 1 \setminus 3+4\0]$ in strings? In this case,
\begin{enumerate}
	\item When an element of $[2+3\0]$ is hit, a string ends, as $[2+3\0]$ is not in the domain of $F_l^{-1}$: these elements generate no image in the next iteration.
	\item All elements hit through this procedure are indeed in $[2+2\0 \cup 5+4\0 = \N \setminus 1 \setminus 3+4\0]$, since this is the range of $F_l^{-1}$ if $[1 \mapsto 1]$ is ignored.
	\item All elements of $[\N]$ that are hit through this procedure, are hit exactly once, since $F_l^{-1}$ is one-to-one and $[3+4\0]$ (the starting point) is not in the range of $F_l^{-1}$.
\end{enumerate}
It remains to be shown that \emph{all} of $[2+2\0 \cup 5+4\0]$ is indeed hit.

If it could be shown that recursive application of $F_l$ on $[2+3\0]$ hits all of $[3+3\0 \cup 4+3\0]$, which in union with $[2+3\0]$ equals $[\N \setminus 1]$, while it could also be shown that recursive application of $F_l^{-1}$ on $[3+4\0]$ hits all of $[2+2\0 \cup 5+4\0]$, which in union with $[3+4\0]$ equals $[\N \setminus 1]$ as well, then the conclusion would inevitably arise that all the ends meet, and that $[\N \setminus 1]$ is partitioned in strings that run from an element of $[2+3\0]$ to an element of $[3+4\0]$. This means that all trajectories except the trivial loop go through $[3+4\0]$.

It is necessary to apply the pigeonhole principle: ``of any and all $\mathcal{N}$ consecutive elements of $[\N \setminus 1]$, exactly $\mathcal{N}$ are included in the strings". This pigeonhole principle can be applied because the subsets of $[\N \setminus 1]$ that show up in the analysis are periodic in the \emph{ensemble direction}: they are of the form $[a+b\0]$, with $a \in [\N]$ and $b \in \N$. Provided that $[a] \leq b$, we can make the statement that ``of any and all $b$ consecutive elements of $[\N]$, exactly one is in $[a+b\0]$". Let $a$ in this expression be called the `intercept', while $b$ is, as before, the `interval'. The intercept requires special attention, because if $[a] > b$, the statement that ``of any and all $b$ consecutive elements of $[\N]$, exactly one is in $[a+b\0]$" would not be true: in the first $a-1 \geq b$, none would be in $[a+b\0]$.

\subsection{Does recursive application of $F_l$ on $[2+3\0]$ yield that all of $[\N \setminus 1]$ is included in the strings?}
Let $[A_0:=2+3\0]$, $[A_{1}:=F_l(2+3\0)]$ and so forth, so that $[A_{k+1}:=F_l(A_k)]$. All $[A_k]$ are periodic in the ensemble. For instance, $[A_1=3+9\0 \cup 4+9\0]$ consists of two periodic subsets, $[3+9\0]$ and $[4+9\0]$, that have intercepts [3], respectively [4], and interval 9.

The interval in $[\N]$ between any two consecutive elements that are mapped through $F_1$ is 2, while for $F_2$ this is 4, while for those elements mapped through $F_{>2}$ (i.e. $[3+4\0]$) this is 4 as well. As the interval of $[2+3\0]$ is co-prime with 2 and 4, it follows that this is true also for $[2+3\0]$ (because of Lemma 1 in Appendix A1). As a result, of the first four elements of $[2+3\0]$, indeed for any and all four consecutive elements of $[2+3\0]$, and indeed for any and all four consecutive elements in a $z$-proportional set, exactly two (with an interval of 2) are mapped through $F_1$, exactly one is mapped through $F_2$, and exactly one is mapped through $F_{>2}$, i.e. is in $[3+4\0]$ and does not generate an image in the next iteration. Starting from $[2+3\0]$, the subsets of $[\N \setminus 1]$ that are obtained through repeated application of $F_l$ all are $z$-proportional (conform with Lemma 1 in Appendix A1), while the number of $z$-proportional subsets doubles every mapping: Those elements that are mapped through $F_1$, respectively $F_2$, are found at intervals in $[\N]$ that are multiples of 2, respectively 4. $F_l$ divides this interval by 2, respectively 4, and multiplies it by 3. Hence, the intervals of subsets of $[A_k]$ are $3^{k+1}$, co-prime with 2 and 4, so that $z$-proportionality is maintained over an arbitrarily large number of recursive applications of $F_l$. Every application of $F_l$, two new $z$-proportional subsets are formed for each $z$-proportional subset in the previous iteration: one through $F_1$ and one through $F_2$. Hence, $[A_k]$ consist of $2^k$ $z$-proportional subsets with interval $3^{k+1}$ in $[\N]$.

As the periodic subsets of $A_k$ have intervals $3^{k+1}$, we can divide up $[\N \setminus 1]$ in sections of $3^m$ consecutive elements and ask how many of these are in $\bigcup A_k, \; 0 \leq k<m$. Start with $m=1$, then $m=2$, then $m=3$ and so forth, finally letting $m \rightarrow \infty$.

Of any and all three consecutive elements of $[\N \setminus 1]$, exactly one is in $[A_0=2+3\0]$. Of any and all nine consecutive elements of $[\N \setminus 1]$, exactly two are in $[A_1=3+9\0 \cup 4+9\0]$. Also, exactly three are in $[A_0]$. $[A_2=18+27\0 \cup 16+27\0 \cup 6+27\0 \cup 10+27\0]$, so that of any and all 27 consecutive elements of $[\N \setminus 1]$, exactly four are in $[A_2]$. In addition, of any and all 27 consecutive elements of $[\N \setminus 1]$, exactly six are in $[A_1]$, while exactly 9 are in $[A_0]$, and therefore that exactly nineteen are in $[A_0 \cup A_1 \cup A_2]$. And so forth. We hence find that of $3^m$ consecutive elements of $[\N \setminus 1]$, exactly
\begin{equation}
\sum_{k=0}^m 2^k \cdot 3^{m-k-1}
\end{equation}
are included in $[\bigcup A_k, \; 0 \leq k < m]$. Since
\begin{equation}
\lim_{m \rightarrow \infty} \sum_{k=0}^m 2^k/3^{k+1} = 1,
\end{equation}
it follows that
\begin{equation}
\label{limit1}
\lim_{m \rightarrow \infty} \sum_{k=0}^m 2^k \cdot 3^{m-k-1}=3^m,
\end{equation}
the desired result.

However, care should be taken of the intercepts, i.e., for the first $3^m$ elements of $[\N \setminus 1]$: from which point onwards can we take sections of $[\N \setminus 1]$ of $3^m$ consecutive elements that all look identical, so that we can be sure that the above is true? Perhaps it could be possible only to assure consecutive sections of $[\N \setminus 1]$ of $3^m$ consecutive elements of $[\N \setminus 1]$ starting from, say, [10], so that $[[10,10+3^k),[10+3^k,10+2 \cdot 3^k),[10+2 \cdot 3^k,10+3 \cdot 3^k),\ldots]$ all look the same in terms of periodicity. Then it remains to be seen if [3], [4], [6], [7] and [9] are indeed included in strings. Therefore the following Lemma.
\begin{lemma}
\label{Ck}
Let $[C_k]$ and $[V_k]$ be the intercept respectively the interval of some $z$-proportional subset of $[A_k]$. Then it holds that $[C_k<V_k]$ for all $k$.
\end{lemma}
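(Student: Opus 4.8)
The plan is to prove the bound $[C_k<V_k]$ by induction on $k$, tracking both the intercept and the interval of every $z$-proportional subset of $[A_k]$ simultaneously. The interval is easy: as established in the discussion preceding the Lemma, every $z$-proportional subset of $[A_k]$ has interval $V_k=3^{k+1}$, independent of which subset we chose. So the real content is controlling the intercepts, and what I want to show is the slightly cleaner statement that every intercept $[C_k]$ satisfies $1\le [C_k]\le 3^{k+1}$, i.e. it lies in the first period.

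For the base case $k=0$ we have $[A_0=2+3\0]$, with the single subset having intercept $[2]$ and interval $3=3^{0+1}$, and indeed $2<3$. For the inductive step, suppose $[S]=[C_k+3^{k+1}\0]$ is a $z$-proportional subset of $[A_k]$ with $[C_k]\le 3^{k+1}$. By $z$-proportionality and the interval/coprimality argument already given, $[S]$ splits into exactly two $z$-proportional pieces in the next iteration: the elements of $[S]$ mapped through $F_1$ (a subset of $[2+2\0]$, occurring at every second element of $[S]$) and those mapped through $F_2$ (a subset of $[1+4\0]$). I would compute the intercept of each image directly. If $[x=C_k+3^{k+1}m]$ is the smallest element of $[S]$ lying in $[2+2\0]$, then $F_1$ sends it to $[\tfrac{3}{2}(x-2)+3=\tfrac{3x}{2}]$ — more usefully, writing the relevant element of $[S]$ as the first one with the correct residue mod $2$ (resp. mod $4$), its index in $[S]$ is some $m\in\{0,1\}$ (resp. $m\in\{0,1,2,3\}$), and the resulting image has intercept obtained by applying the explicit affine map $F_l$ to $[C_k+3^{k+1}m]$. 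Since $m\le 3$ and the map $F_l$ locally multiplies the position by $3$ and divides the interval by $2$ or $4$, the new intercept is at most roughly $3\cdot(C_k+3\cdot 3^{k+1})/2<3^{k+2}=V_{k+1}$, with the worst case coming from $F_2$ where $m$ can be as large as $3$; a short calculation pins the constant down. The key point making this work is precisely $z$-proportionality: it guarantees the required residue occurs among the first $2$ (resp. $4$) elements of $[S]$, so $m$ is bounded independently of $k$, and hence the new intercept stays within one period of the new, larger interval $3^{k+2}$.

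The main obstacle I anticipate is bookkeeping rather than conceptual: one must handle the two branches $F_1$ and $F_2$ separately, be careful that the "smallest element of $[S]$ in the appropriate congruence class" is identified correctly (this is where $[C_k]\le V_k$ from the inductive hypothesis is used — without it the congruence class might first be met far out in $[S]$), and verify that after applying $F_l$ and reducing the interval the intercept has not overshot $3^{k+2}$. I would organize this as a single inequality $[C_{k+1}]\le \tfrac{3}{2}([C_k]+3^{k+1}\cdot 3)$ valid for both branches (the $F_2$ branch being the tighter one), then check by the inductive hypothesis $[C_k]\le 3^{k+1}$ that the right-hand side is $<3^{k+2}$, closing the induction. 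An entirely parallel argument, which I would remark on rather than write out, handles the analogous statement for the intercepts of the subsets of $[\N\setminus 1]$ generated by recursive application of $F_l^{-1}$ on $[3+4\0]$, since $F_l^{-1}$ is likewise piecewise affine with bounded local distortion.
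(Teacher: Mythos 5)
Your overall strategy is the same as the paper's: induct on $k$, use the fact that $V_k=3^{k+1}$, and use $z$-proportionality to guarantee that the element of a subset $[C_k+V_k\0]$ feeding each branch of $F_l$ occurs among its first two (for $F_1$) resp.\ first four (for $F_2$) elements, so the new intercept is the image under an explicit affine map of $[C_k+mV_k]$ with $m$ bounded. However, the single inequality you propose to close the induction is false. You bound both branches by $[C_{k+1}]\le\tfrac{3}{2}\bigl([C_k]+3\cdot 3^{k+1}\bigr)$ and claim this is $<3^{k+2}$; but even for $[C_k]=1$ the right-hand side is about $\tfrac{3}{2}\cdot 3^{k+2}$, and under the inductive hypothesis $[C_k]\le 3^{k+1}$ it can be as large as $2\cdot 3^{k+2}$. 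So the inequality $\tfrac{3}{2}([C_k]+3\cdot 3^{k+1})<3^{k+2}$ never holds, and as written the induction does not close. The error comes from pairing the worst multiplier ($3/2$, from $F_1$) with the worst offset ($m=3$, which only occurs for $F_2$): that combination cannot occur.

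The fix — and this is exactly what the paper does — is to treat the two branches with their own paired bounds. For the $F_1$ branch the relevant preimage has index $m\le 1$, so the new intercept is $\tfrac{3}{2}x$ with $x\le C_k+V_k<2V_k$, hence $<3V_k=V_{k+1}$. For the $F_2$ branch the index can be as large as $m=3$, but the map is $x\mapsto 3(x-1)/4+1$ (multiplier $\approx 3/4$), giving $C_{k+1}\le 3(C_k+3V_k-1)/4+1<3V_k=V_{k+1}$, where the strict inductive hypothesis $C_k<V_k$ (or careful bookkeeping of the $\pm1$ terms) is needed for strictness; the paper takes this $F_2$-on-the-fourth-element case as the overall worst case. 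With the branches separated in this way your argument becomes correct and coincides with the paper's proof; your base case ($A_0$, intercept $2<3$) versus the paper's ($A_1$, intercepts $3,4<9$) is an immaterial difference.
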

\begin{proof}
By induction. In the first four elements of a $z$-proportional set, all of $F_1$, $F_2$ and $F_{>2}$ occur and the ordering of how $F_1$, $F_2$ and $F_{>2}$ occur throughout the entire $z$-proportional subset is fixed. The possible orderings are $F_1, F_2/F_{>2}, F_1, F_{>2}/F_2$ and $F_2/F_{>2}, F_1, F_{>2}/F_2, F_1$. Once the ordering is known for the first four elements of a $z$-proportional set, it is known through the entire $z$-proportional set. $[A_k]$ is made up of $z$-proportional subsets of $[\N]$ for all $k$.

Now take some $[A_k]$ and apply $F_l$ to obtain $[A_{k+1}]$. The intercepts of the newly formed $z$-proportional subsets that make up $[A_{k+1}]$ are formed through application of $F_l$ on the first four elements of each of the $z$-proportional subsets that make up $[A_k]$: one intercept derives from application of $F_1$, the other from application of $F_2$. These first four elements are: $[C_k, C_k+V_k, C_k+2V_k, C_k+3V_k]$. The highest possible new intercept $[C_{k+1}]$ is formed if $[C_k+3V_k]$ is mapped through $F_2$ (it is readily verified that all other possible intercepts are lower). Thus,
\begin{equation}
[C_{k+1} \leq 3(C_k+3V_k-1)/4+1].
\label{Ck+1}
\end{equation}
Provided that $[C_k \leq V_k]$ (i.e., assuming this Lemma is true), this gives
\begin{equation}
[C_{k+1} < 3 \cdot 4V_k/4=V_{k+1}].
\label{Ck+12}
\end{equation}
Substituting $k$ for $k+1$ in equation (\ref{Ck+12}) and noticing that for $[A_1=3+9\0 \cup 4+9\0]$, the intercepts of its $z$-proportional subsets are $[3]<9$ and $[4]<9$, conform with the lemma statement, completes the proof.
\end{proof}

The intercepts of $[A_1=3+9\0 \cup 4+9\0]$, $[3]$ and $[4]$, are the lowest elements of $[3+3\0]$ and $[4+3\0]$, the part of $[\N \setminus 1]$ that is not in $[2+3\0]$. Together with Lemma (\ref{Ck}), which assures that the intercepts remain below the interval for all $z$-proportional subsets that make up all $[A_k]$, we are thus sure that the identical sections of $3^m$ consecutive elements of $[\N \setminus 1]$ start at 2: all the intercepts of $[A_k, \; k>1]$ fall between $[4]$ and $[3+3^{k+1}]$. 

Thus, we have assured that for all the $z$-proportional subsets that make up $[A_k]$ the intercept does not exceed the interval. This means that we can make statements of the sort: ``of any and all $3^m$ consecutive elements of $[\N \setminus 1]$, exactly so many are in the strings'', \emph{including the first $3^m$ elements}. We have also seen that $\lim_{m \rightarrow \infty} \sum_{k=0}^m 2^k \cdot 3^{m-k-1}=3^m$ out of any and all $3^m$ consecutive elements of $[\N \setminus 1]$ are eventually included in the strings. This gives an exact match between the number of pigeons and the number of pigeonholes.

However, this argument relies on taking a limit, and it is unsure whether such a counting process could just by itself constitute proof. Consider the point at which we have $A_0=2+3\0$ and $[A_1=3+9\0 \cup 4+9\0]$. All $[x]\leq27$ will now have to appear in some $A_{k>1}$ as an intercept; since we have no way of predicting intercepts, how can we assure that this happens? From the above it also follows that in any and all $3^m$ elements of $[\N]$, exactly $3^m-\sum_{k=0}^m 2^k \cdot 3^{m-k-1}=2^m$ are not in $[\bigcup A_k, \; 0 \leq k < m]$. What of these positions? It turns out that $2^m$ is exactly the number of pigeonholes necessary for the pigeons that are included in strings at a later point, i.e. that are in $[\bigcup A_k, \; k \geq m]$, as the following shows.

When evaluating $3^m$ consecutive elements of $[\N \setminus 1]$, we find that exactly $\sum_{k=0}^m 2^k \cdot 3^{m-k-1}$ of these are in $[\bigcup A_k, \; 0 \leq k < m]$. All $[A_k], k < m-1$ have already set up the further inclusion of positions in strings: $[A_0]$ has set up $[A_1]$, $[A_1]$ has set up $[A_2]$, and so forth. The only $z$-proportional subsets that are still `live' are $[A_{m-1}]$: these will set up $[A_m]$, which will set up $[A_{m+1}]$, and so forth as the string formation process is continued. Since $[A_{m-1}]$ consists of $2^{m-1}$ $z$-proportional subsets, there are $2^{m-1}$ `live' intercepts. In the next $3^m$ consecutive elements of $[\N \setminus 1]$, there are, again, $2^{m-1}$ `live' positions, and so forth. How do these relate to mappings into each $3^m$ consecutive elements of $[\N \setminus 1]$?

In $[\N \setminus 1]$, indeed in any $z$-proportional subset of $[\N \setminus 1]$, in any and all four consecutive positions, exactly 2 map through $F_1$, exactly one maps through $F_2$, while exactly one does not map through $F_l$. $F_1$ multiplies by 3/2, while $F_2$ multiplies, allowing rounding, by 3/4. Hence, all elements in the first $\mathcal{N} \cdot 2/3$ consecutive elements of $[\N \setminus 1]$ that map through $F_1$ map into the first $\mathcal{N}$ consecutive elements of $[\N \setminus 1]$. These are $\mathcal{N} \cdot 2/3 \cdot 1/2 = \mathcal{N} \cdot 1/3$ positions, each giving one map in the $\mathcal{N}$ positions. Meanwhile, all elements in the first $\mathcal{N} \cdot 4/3$ consecutive elements of $[\N \setminus 1]$ that map through $F_2$ map into the first $\mathcal{N}$ consecutive elements of $[\N \setminus 1]$. These are $\mathcal{N} \cdot 4/3 \cdot 1/4 = \mathcal{N} \cdot 1/3$ positions, each giving one map in the $\mathcal{N}$ positions. Taken together, then, a total of $2/3 \cdot \mathcal{N}$ will be hit if we apply $F_l$ once on $[\N \setminus 1]$, distributed uniformly across the first $\mathcal{N}$ consecutive elements of $[\N \setminus 1]$. The same is true for any and all $\mathcal{N}$ consecutive elements of $[\N \setminus 1]$: fully in line with the range of $F_l$ being $\{1+3\0 \cup 3+3\0\}$, two thirds of any $\mathcal{N}$ consecutive elements of $[\N \setminus 1]$ is hit. For the image of the mapping of $[\N \setminus 1]$ through $F_l$, the same goes, so that $(2/3)(2/3)=4/9$ of any $\mathcal{N}$ consecutive elements of $[\N \setminus 1]$ is hit. And so forth, so that the geometric series $2/3+4/9+8/27+\ldots=2$ is formed as $\mathcal{N} \rightarrow \infty$. Without loss of generality, we can let $\mathcal{N} \rightarrow \infty$ while letting $[\mathcal{N} \in 3+4\0]$, so that the rounding for $F_2$ does not upset the result. On the average, it is also exactly true for $\mathcal{N}$ not going to infinity: for each live position within $3^m$ consecutive elements of $[\N \setminus 1]$, exactly 2 positions are hit within those $3^m$ consecutive elements of $[\N \setminus 1]$ (which does not mean that these 2 positions are also hit by one of the live positions within the same $3^m$ consecutive elements of $[\N \setminus 1]$; indeed, often they are not).

All $z$-proportional subsets of $[A_{m-1}]$ exhibit the exact same mapping behavior as $[\N \setminus 1]$; only the intercept may differ. Thus, as $\mathcal{N} \rightarrow \infty$, $[A_{m-1}]$ gives a mapping in $[\N \setminus 1]$ that is the same as the mapping of $[\N \setminus 1]$, except for its density being reduced by a factor $2^{m-1}/3^m$, such that it gives exactly $2\cdot2^{m-1}=2^m$ hits within $3^m$ consecutive elements of $[\N  \setminus 1]$ as $\mathcal{N} \rightarrow \infty$. On the average, it is also exactly true for $\mathcal{N}$ not going to infinity, say $3^m$. There will then be significant spillover between bins of $3^m$ consecutive elements of $[\N \setminus 1]$: some of these bins will give more than their share of pigeons, others less. This average takes account of locality, i.e. pigeons and pigeonholes. Combining this with the fact that the intercepts of all $A_k, \; k \geq m$ will continue to not exceed the intervals at all times, we conclude that the match between the number of pigeons and the number of pigeonholes is indeed exact.

Hence, it is \emph{necessary} that within any and all $3^m$ consecutive elements of $[\N \setminus 1]$, $2^m$ are not in $[\bigcup A_k, \; k < m]$: these positions serve as pigeonholes for the pigeons that emerge as the string formation process progresses. If they were not open, this would mean that some positions at a later stage in the string formation process have nowhere to map, while we do know that they map somewhere: a contradiction. However, there is an exact match between the number of live positions and the number of open pigeonholes within each section of $3^m$ consecutive elements of $[\N \setminus 1]$.

Thus, we have good reasons to believe that recursive application of $F_l$ on $[2+3\0]$ includes all of $[\N \setminus 1]$ in strings.

\subsubsection{Two potential pitfalls in the interpretation of the argument}
First, it is helpful to point out that the proof method above does not lead to the same conclusion for $3n-1$ numbers, where $3n+1$ in equation (\ref{Collatz}) is replaced by $3n-1$. In this case, strings also start at $[2+3\0]$ and end at $[3+4\0]$, but $[2+3\0]$ map to $[6+9\0 \cup 7+9\0]$, leaving positions $[3]$ and $[4]$ out of the intercept. Verifying where these positions map manually, as we did for recursive application of $F_l^{-1}$ on $[3+4\0]$ above, it turns out that these positions are elements of no strings, instead forming a cycle. This is not the case for $3n+1$ numbers (see above), which demonstrates the rationale of verifying the intercepts.

Second, in the argument above it was stated that for every live position in $3^m$ consecutive elements of $[\N \setminus 1]$, 2 positions are being hit at a later point within those $3^m$ consecutive elements of $[\N \setminus]$. This does not mean that those 2 positions are indeed hit from within those $3^m$ consecutive elements of $[\N \setminus 1]$. In the same vein, the argument is not that the collection of intercepts be $z$-proportional. Indeed, the collection of intercepts for any $A^k$ may be far from $z$-proportional; the intercepts of $A_2$, for example, are $[6,10,16,18]$, which all map through $F_1$. As discussed, there will be significant overflow between bins of $3^m$ consecutive elements of $[\N \setminus 1]$. In any and all $3^m$ consecutive elements of $[\N \setminus 1]$, there are exactly $2^m$ pigeonholes, and exactly $2^{m-1}$ live positions. On the average, for each live position within $3^m$ consecutive elements of $[\N \setminus 1]$, exactly 2 positions are hit within those $3^m$ consecutive elements of $[\N \setminus 1]$.

\subsection{Does recursive application of $F_l^{-1}$ on $[3+4\0]$ yield that all of $[\N \setminus 1]$ is included in the strings?}
I now evaluate recursive application of $F_l^{-1}$ on $[3+4\0]$, which is essentially a reverse copy of the above. Recall that $F_l^{-1}: [3+3\0 \cup 1+3\0 \rightarrow 2+2\0 \cup 1+4\0]$, such that 
\begin{equation}
F_l^{-1}([3+3m])=[2+2m]|m \in \0, \; F_l^{-1}([1+3m])=[1+4m]|m \in \0.
\end{equation}
Recall that $F_l^{-1}$ consists of $F_1^{-1}$ and $F_2^{-1}$. Whether an element of $[\N]$ is mapped through $F_1^{-1}$, $F_2^{-1}$, or is outside of the domain of $F_l^{-1}$ depends on its residual (mod 3). Let this residual be denoted $y:[\N] \rightarrow \{0,1,2\}$:
\begin{equation}
y([x]):= k \in \{0,1,2\}: ([x]-k)/3 \in \N.
\label{y}
\end{equation}
If $y([x])=0$, $[x]$ is mapped to a smaller element of $[\N]$ through $F_1^{-1}$, if $y([x])=1$, $[x]$ is mapped to a larger element of $[\N]$ through $F_2^{-1}$, while if $y([x])=2$, $[x]$ is not in the domain of $F_l^{-1}$. 

Clearly, of any and all three consecutive elements of $[\N]$, exactly one is mapped through $F_1^{-1}$, exactly one is mapped through $F_2^{-1}$, and exactly one is not mapped through $F_l^{-1}$. Those element of $[\N]$ that are mapped through either $F_1^{-1}$ or $F_2^{-1}$ are found at intervals of 3. $F_1^{-1}$ or $F_2^{-1}$ divides 3 out of the interval and multiplies by, respectively, 2 or 4 to produce a new interval. This is co-prime with 3, so that because of Lemma 1 in Appendix A1, also in any and all three consecutive elements of this new subset of $[\N]$, exactly one is mapped through $F_1^{-1}$, exactly one is mapped through $F_2^{-1}$, and exactly one is not mapped through $F_l^{-1}$. And so on. I define this property:
\begin{definition}
\label{Y-prop}
$y$-proportionality: If $[x \in \N]$ is mapped through some permutation of $n$ occurrences of $F_1^{-1}$ or $F_2^{-1}$, then this same permutation of mappings occurs at but not before $[x+3^n]$. Indeed, of any and all $3^n$ consecutive elements of $[\N]$, exactly one is mapped through a specified permutation of $n$ occurrences of $F_1^{-1}$ or $F_2^{-1}$ (in that order).
\end{definition}
For instance, $[7]$ maps through $F_1^{-1}$ to $[9]$, $[9]$ maps through $F_2^{-1}$ to $[6]$, $[6]$ maps through $F_2^{-1}$ to $[4]$, and $[4]$ maps through $F_1^{-1}$ to $[5]$. This exact same permutation of $F_1^{-1},F_2^{-1},F_2^{-1},F_1^{-1}$, in that order, occurs at but not before $[7+3^4=88]$.
 
Sets that have this property I call `$y$-proportional'. Following Lemma (1 of Appendix A1), any subset of $[\N]$ taken from $[\N]$ with a period that is co-prime with 3 is $y$-proportional. Hence, the ranges of $F_1^{-1}$ and $F_2^{-1}$ are $y$-proportional, as is $[3+4\0]$.

Let $[B_0=3+4\0]$, let $[B_1:=F_l^{-1}([3+4\0)]$ and let $[B_{k+1}:=F_l^{-1}(B_k)]$. Then $[B_1=2+8\0 \cup 9+16\0]$. Next, $[B_2=F_l^{-1}(B_1)=12+16\0 \cup 13+32\0 \cup 6+32\0 \cup 33+64\0]$ and so forth. Every iteration, application of $F_l^{-1}$ on a $y$-proportional subset yields two new $y$-proportional subsets; one through $F_1^{-1}$ and one through $F_2^{-1}$. The new $y$-proportional subset that is formed through $F_1^{-1}$ has an interval that is twice the interval of the $y$-proportional subset it was derived from, while the new $y$-proportional subset that is formed through $F_2^{-1}$ has an interval that is four times the interval of the $y$-proportional subset it was derived from. And so on. Every time $F_1^{-1}$ or $F_2^{-1}$ divides out 3 from the interval and multiplies by either 2 or 4 to form the new interval, so that the new interval too is $y$-proportional.

Divide $[\N \setminus 1]$ in sections of $4^m$ consecutive elements of $[\N \setminus 1]$ and evaluate the number of elements in each of those sections that is in $[B_k, \; 0 \leq k < m]$. Start with $m=1$, then $m=2$, and so forth, finally letting $m \rightarrow \infty$. For $m=1$ we find that of any and all four consecutive elements of $[\N \setminus 1]$, exactly one is in $[B_0=3+4\0]$. For $m=2$ we find that of any and all sixteen consecutive elements of $[\N \setminus 1]$, exactly four are in $[B_0]$. Since $[B_1=2+8\0 \cup 9+16\0]$, two are in $[2+8\0]$ while one is in $[9+16\0]$. Thus, in total we find that of any and all sixteen consecutive elements of $[\N \setminus 1]$, exactly seven are in $[B_0 \cup B_1]$. Next, for $m=3$ we find that of any and all 64 consecutive elements of $[\N \setminus 1]$, exactly 28 are in $[B_0 \cup B_1]$. Since $[B_2=F_l^{-1}(B_1)=12+16\0 \cup 13+32\0 \cup 6+32\0 \cup 33+64\0]$, we find that four are in $[12+16\0]$, two are in $[13+32\0]$, two are in $[6+32\0]$, while one is in $[33+64\0]$. This gives a grand total of $28+4+2+2+1=37$ out of any and all 64 consecutive elements of $[\N \setminus 1]$. And so forth. Hence, we find that of $4^m$ consecutive elements of $[\N \setminus 1]$, exactly
\begin{equation}
\sum_{k=0}^m 3^k \cdot 4^{m-k-1}
\end{equation}
are included in $[\bigcup B_k, \; 0 \leq k < m]$. Since
\begin{equation}
\lim_{m \rightarrow \infty} \sum_{k=0}^m 3^k/4^{k+1} = 1,
\end{equation}
it follows that
\begin{equation}
\lim_{m \rightarrow \infty} \sum_{k=0}^m 3^k \cdot 4^{m-k-1}=4^m,
\end{equation}
the desired result. As above, the intercepts should be verified. Hence the following Lemma.
\begin{lemma}
\label{Dk}
Let $[D_k]$ and $[W_k]$ be the intercept respectively the interval of some $z$-proportional subset of $[A_k]$. Then it holds that $[D_k<W_k]$.
\end{lemma}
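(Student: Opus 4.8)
The plan is to prove the inequality by induction on $k$, observing at the outset that, read literally, the statement ranges over exactly the $z$-proportional subsets of $[A_k]$ — the very objects treated in Lemma \ref{Ck} — with $[D_k]$ and $[W_k]$ serving as the intercept and interval in place of $[C_k]$ and $[V_k]$. The conclusion $[D_k < W_k]$ is therefore already supplied by Lemma \ref{Ck} under the identification $([D_k],[W_k]) = ([C_k],[V_k])$, and strictly speaking nothing further is required. For completeness I would simply re-run that induction. For the base case I take $k=1$: since $[A_1 = 3+9\0 \cup 4+9\0]$, its two $z$-proportional subsets have intercepts $[3]$ and $[4]$ against the common interval $9$, so $[D_1 < W_1]$ holds in both cases.

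For the inductive step I would assume $[D_k < W_k]$ for every $z$-proportional subset of $[A_k]$ and track the intercepts under one application of $F_l$. Each such subset has interval $[W_k = 3^{k+1}]$, and over its first four elements $[D_k, D_k + W_k, D_k + 2W_k, D_k + 3W_k]$ exactly two map through $F_1$, one maps through $F_2$, and one lands in $[3+4\0]$ and drops out. Applying $F_l$ therefore splits each subset into two new $z$-proportional subsets of $[A_{k+1}]$, one built from its $F_1$-images and one from its $F_2$-images, both of interval $[W_{k+1} = 3W_k = 3^{k+2}]$.

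The decisive computation is to bound the \emph{largest} new intercept. Since $F_1$ and $F_2$ are increasing, the intercept of each new subset is the image of the smallest element of the old subset that maps through the relevant restriction, and that element lies among the four listed above. Comparing the admissible orderings recorded in the proof of Lemma \ref{Ck}, the maximal intercept occurs when the fourth element $[D_k + 3W_k]$ is the one mapping through $F_2$, giving $[D_{k+1} \leq 3(D_k + 3W_k - 1)/4 + 1]$; invoking $[D_k < W_k]$ yields $[D_k + 3W_k < 4W_k]$ and hence $[D_{k+1} < 3 \cdot 4W_k/4 = W_{k+1}]$, which closes the induction. I expect the only point needing care to be the case check — exactly as flagged in Lemma \ref{Ck} — that this $F_2$-image dominates every competing candidate, in particular the largest possible $F_1$-intercept $[F_1(D_k + W_k)]$; the comparison reduces to the positive quantity $[3(W_k - D_k) + 1]$, after which the integer rounding in the $F_2$-image is harmless and the single inequality above suffices.
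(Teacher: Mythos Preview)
Your observation that the lemma, read literally, is a verbatim restatement of Lemma~\ref{Ck} is correct, and your re-run of that induction is fine. However, comparing with the paper's own proof reveals that the lemma statement carries a typo: in context (Section~3.2) the objects under discussion are the $y$-proportional subsets of $[B_k]$, not the $z$-proportional subsets of $[A_k]$. The paper's proof accordingly works with $F_l^{-1}$ rather than $F_l$: its base case is $[B_1 = 2+8\0 \cup 9+16\0]$ with intercepts $[2]<8$ and $[9]<16$; its inductive bound comes from the \emph{third} element of a $y$-proportional subset mapping through $F_2^{-1}$, giving
\[
[D_{k+1} \le 4(D_k + 2W_k - 1)/3 + 1] < 4W_k = W_{k+1},
\]
with the interval multiplying by $4$ (not $3$) at each step.

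So your proof is formally correct for the statement as printed, but it does not match the paper's proof because the paper is proving the intended $B_k$-version. The substantive content you are missing is the mirror-image argument for $F_l^{-1}$: three (not four) consecutive elements determine the ordering, the worst case is $F_2^{-1}$ applied to the third element, and the interval recursion is $W_{k+1}=4W_k$. Had you inferred the typo from the surrounding text and the role the lemma plays (controlling intercepts for the $[B_k]$ process), the required argument is the direct analogue of what you wrote, with $F_1,F_2$ replaced by $F_1^{-1},F_2^{-1}$ and the factors $3/4$, $3$ swapped for $4/3$, $4$.
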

\begin{proof}
By induction. The highest possible intercept of a new $y$-proportional subset is formed if the third element of the $y$-proportional subset it is derived from maps through $F_2^{-1}$. Thus,
\begin{equation}
[D_{k+1} \leq 4(D_k+2W_k-1)/3+1].
\label{D}
\end{equation}
Assuming that $[D_k < W_k]$, this yields
\begin{equation}
[D_{k+1} < 4(3W_k)/3=4W_k=W_{k+1}].
\label{W}
\end{equation}
Substituting $k$ for $k+1$ in equation (\ref{W}) and noticing that for $[B_1=2+8\0 \cup 9+16\0]$, the intercepts of its $y$-proportional subsets are $[2]<8$ and $[9]<16$, conform with the lemma statement, then completes the proof.
\end{proof}

The intercepts of $[B_1=2+8\0 \cup 9+16\0]$ are \emph{not} the lowest elements of $[\N \setminus 1 \setminus 3+4\0]$, $[2]$ and $[4]$. Thus, we verify manually that $[4]$, $[5]$, $[6]$, and $[8]$ are in strings, which is the case. Together with Lemma (\ref{Dk}), which assures that the intercepts remain below the interval for all $y$-proportional subsets that make up all $[B_k]$, we are thus sure for some large enough $m$ that the identical sections of $4^m$ consecutive elements of $[\N \setminus 1]$ start at 2: all the intercepts of $[B_k, \; k>1]$ fall between $[4]$ and $[2+4^{k+1}$].

Thus, we have assured that for all the $y$-proportional subsets that make up $[B_k]$ the intercept does not exceed the interval. This means that we can make statements of the sort: ``of any and all $4^m$ consecutive elements of $[\N \setminus 1]$, exactly so many are in the strings'', including the first $4^m$ elements. We have also seen that $\lim_{m \rightarrow \infty} \sum_{k=0}^m 3^k \cdot 4^{m-k-1}=4^m$ of out of any and all $4^m$ consecutive elements of $[\N \setminus 1]$ are eventually included in the strings.

As above, are we sure that such a counting process is sufficient? For instance, if we have $m=3$ and consider $\bigcup B_k, ;\ k=0,1,2$, any $[x]\leq 256$ can be included only as an intercept of some $y$-proportional subset; since we have no way of predicting the intercepts, how can we be sure that this happens? From the above it also follows that in any and all $4^m$ elements of $[\N \setminus 1]$, exactly $4^m-\sum_{k=0}^m 3^k \cdot 4^{m-k-1}=3^m$ are not in $[\bigcup B_k, \; 0 \leq k < m]$. What of these positions? Again, it turns out that $3^m$ is exactly the number of pigeonholes necessary for the pigeons that are included in strings at a later point, i.e. that are in $[\bigcup B_k, \; k \geq m]$, as the following shows. 

When evaluating $4^m$ consecutive elements of $[\N \setminus 1]$, we find that exactly $\sum_{k=0}^m 3^k \cdot 4^{m-k-1}$ of these are in $[\bigcup B_k, \; 0 \leq k < m]$. All $[B_k], k < m-1$ have already set up the further inclusion of positions in strings: $[B_0]$ has set up $[B_1]$, $[B_1]$ has set up $[B_2]$, and so forth. The only $y$-proportional subsets that are still `live' are $[B_{m-1}]$: these will set up $[B_m]$, which will set up $[B_{m+1}]$, and so forth as the string formation process is continued. $[B_{m-1}]$ consists of $2^{m-1}$ $y$-proportional subsets, with intervals such that there are $3^{m-1}$ `live' positions in any and all $4^m$ consecutive elements of $[\N \setminus 1]$. How do these relate to mappings into each $4^m$ consecutive elements of $[\N \setminus 1]$?

In $[\N \setminus 1]$, indeed in any $y$-proportional subset of $[\N \setminus 1]$, of any and all three consecutive elements, exactly one maps through $F_1^{-1}$, exactly one maps through $F_2^{-1}$ and exactly one does not map through $F_l^{-1}$. $F_1^{-1}$ multiplies by 2/3, while $F_2^{-1}$ multiplies, allowing rounding, by 4/3. Hence, all elements in the first $\mathcal{N} \cdot 3/2$ consecutive elements of $[\N \setminus 1]$ that map through $F_1^{-1}$ map into the first $\mathcal{N}$ consecutive elements of $[\N \setminus 1]$. These are $\mathcal{N} \cdot 3/2 \cdot 1/3 = \mathcal{N} \cdot 1/2$ positions, each giving one map in the $\mathcal{N}$ positions. Meanwhile, all elements in the first $\mathcal{N} \cdot 3/4$ consecutive elements of $[\N \setminus 1]$ that map through $F_2^{-1}$ map into the first $\mathcal{N}$ consecutive elements of $[\N \setminus 1]$. These are $\mathcal{N} \cdot 3/4 \cdot 1/3 = \mathcal{N} \cdot 1/4$ positions, each giving one map in the $\mathcal{N}$ positions. Taken together, then, a total of $3/4 \cdot \mathcal{N}$ will be hit if we apply $F_l$ once on $[\N \setminus 1]$. The same is true for any and all $\mathcal{N}$ consecutive elements of $[\N \setminus 1]$: fully in line with the range of $F_l^{-1}$ being $\{2+2\0 \cup 5+4\0\}$, three quarters of any and all $\mathcal{N}$ consecutive elements of $[\N \setminus 1]$ is hit. For the image of the mapping of $[\N \setminus 1]$ through $F_l^{-1}$, the same goes, so that $(3/4)(3/4)=9/16$ of any $\mathcal{N}$ consecutive elements of $[\N \setminus 1]$ is hit. And so forth, so that the geometric series $3/4+9/16+27/64+\ldots=3$ is formed as $\mathcal{N} \rightarrow \infty$. Without loss of generality, we can let $\mathcal{N} \rightarrow \infty$ while letting $[\mathcal{N}\N \in 2+3\0]$, so that the rounding for $F_2$ does not upset the result and this is exactly true for all $\mathcal{N}$ consecutive positions. On the average, it is also exactly true for $\mathcal{N}$ not going to infinity: for each live position within $4^m$ consecutive elements of $[\N \setminus 1]$, exactly 3 positions are hit within those $4^m$ consecutive elements of $[\N \setminus 1]$ (which does not mean that these 3 positions are also hit by one of the live positions within the same $4^m$ consecutive elements of $[\N \setminus 1]$; indeed, often they are not).

All $y$-proportional subsets of $[B_{m-1}]$ have the exact same mapping behavior as $[\N \setminus 1]$; only the intercept may differ. Thus, as $\mathcal{N} \rightarrow \infty$, $[B_{m-1}]$ gives a mapping in $[\N \setminus 1]$ that is the same as the mapping of $[\N \setminus 1]$, except for its density being reduced by a factor $3^{m-1}/4^m$, such that it gives exactly $3\cdot3^{m-1}=3^m$ hits within $4^m$ consecutive elements of $[\N  \setminus 1]$ as $\mathcal{N} \rightarrow \infty$. On the average, it is also exactly true for $\mathcal{N}$ not going to infinity, say $4^m$. There will then be significant spillover between bins of $4^m$ consecutive elements of $[\N \setminus 1]$: some of these bins will give more than their share of pigeons, others less. This average takes account of locality, i.e. pigeons and pigeonholes. Combining this with the fact that the intercepts of all $B_k, \; k \geq m$ will continue to not exceed the intervals at all times, we conclude that the match between the number of pigeons and the number of pigeonholes is indeed exact.

Hence, it is \emph{necessary} that within any and all $4^m$ consecutive elements of $[\N \setminus 1]$, $3^m$ are not in $[\bigcup B_k, \; k < m]$: these positions serve as pigeonholes for the pigeons that emerge as the string formation process progresses. If they were not open, this would mean that some positions at a later stage in the string formation process have nowhere to map, while we do know that they map somewhere: a contradiction. However, there is an exact match between the number of live positions and the number of open pigeonholes within each section of $4^m$ consecutive elements of $[\N \setminus 1]$.

Thus, we have good reasons to believe that recursive application of $F_l^{-1}$ on $[3+4\0]$ yields that all of $[\N \setminus 1]$ is included in the strings.

\subsection{Ends meet}
The finding that all elements of $[\N \setminus 1]$ are reached by recursive application of $F_l$ on $[2+3\0]$ (section 4.1), while all elements of $[\N \setminus 1]$ are reached by recursive application of $F_l^{-1}$ on $[3+4\0]$ (section 4.2), means that all of $[\N \setminus 1]$ is reached from either end. Ends must then meet, and it follows that under the conjugate Collatz map $F$, $[\N \setminus 1]$ is partitioned in strings that run from an element of $[2+3\0]$ to an element of $[3+4\0]$.

\section{Why we might think that this partition is interesting}
The string partition, if true, means that although there is no limit to the cardinality of a string (indeed, take any natural number $n \in \N$ and there are infinitely many strings that have this cardinality, spaced according to the law of $z$-proportionality (or $y$-proportionality in the opposite direction)), all strings start with an element of $[2+3\0]$ and end with an element of $[3+4\0]$: it may take infinitely long, but every single mapping through $F_l$ meets an element of $[3+4\0]$. I have succesfully tested this in a simulation up to element $[159902416]$, where R loses precision in the modulus due to floats. A manual test with help of https://www.dcode.fr/collatz-conjecture confirmed that the trajectory of $[159902416]$ too ends up in an element of $[3+4\0]$. Translating this back to natural numbers, this means that no trajectory in the Collatz mapping can avoid going through $5+8\0$.

Consider the generalization
\begin{equation}
\label{Generalization}
	C_p(n)= 	\begin{cases}
					n/2 &\mbox{if n is even}; \\ 
					3n+p & \mbox{if n is odd}, 
			\end{cases} 
\end{equation}
\newline
with $p \in \{\ldots,-3,-1,1,3,\ldots\}$ ($p$s that are even do not make sense). For every member of this generalization, there are trivial loops $\{p\}$ and $\{-p\}$, and there is an equivalence function $[E_p(x):=4x+q]$, with $[q]=(p-3)/2$. As before $[E_p^0(x)=x]$ and $[E^{n+1}(x)=E(E^n(x))$. Furthermore, note that studying $3n+p$ on the negative integers amounts to studying $3n-p$ on the positive integers.

The $3n+p$ numbers come in three classes: with $p \in 1+6\0$, with $p \in 3+6\0$ and $p \in 5+6\0$. This because $3n+p=3(n-2)+p+6$: $3n+p$ has the same image as $3n+p+6$, but with a shift of 2 in terms of $n$, which means a shift of 1 in terms of $[x]$. The following table for $3n+p, \; p \in \{1 + 6\N\}$ will clarify this. In this table, all elements are in $[x]$.

\begin{tabular}{rrrrrrrrrr}
		  3n+37 & 3n+31 & 3n+25 & 3n+19 & 3n+13 & 3n+7 & 3n+1 & \text{maps to} & [x]\\
			-6 & -5 & -4 & -3 & -2 & -1 &  1  & ~ & 1\\
		  -5 & -4 & -3 & -2 & -1 &  1 &  2  & ~ & 3\\
			-4 & -3 & -2 & -1 &  1 &  2 &  3  & ~ & 1\\
		  -3 & -2 & -1 &  1 &  2 &  3 &  4  & ~ & 6\\
			-2 & -1 &  1 &  2 &  3 &  4 &  5  & ~ & 4\\
			-1 &  1 &  2 &  3 &  4 &  5 &  6  & ~ & 9\\
			 1 &  2 &  3 &  4 &  5 &  6 &  7  & ~ & 3\\
			 2 &  3 &  4 &  5 &  6 &  7 &  8  & ~ & 12\\
			 3 &  4 &  5 &  6 &  7 &  8 &  9  & ~ & 7\\
			 4 &  5 &  6 &  7 &  8 &  9 &  10 & ~ & 15\\
			 $\vdots$ & $\vdots$ & $\vdots$ & $\vdots$ & $\vdots$ & $\vdots$ & $\vdots$ & ~ & $\vdots$ 
\end{tabular}

\vspace{1cm}

Now the table for $3n+p, \; p \in \{-1 + 6\N\}$, again all in $[x]$:

\begin{tabular}{rrrrrrrrrr}
		  3n+35 & 3n+29 & 3n+23 & 3n+17 & 3n+11 & 3n+5 & 3n-1 & \text{maps to} & x\\
			-6 & -5 & -4 & -3 & -2 & -1 & 1  & ~ & 1\\
			-5 & -4 & -3 & -2 & -1 &  1 & 2  & ~ & 1\\
			-4 & -3 & -2 & -1 &  1 &  2 & 3  & ~ & 4\\
		  -3 & -2 & -1 &  1 &  2 &  3 & 4  & ~ & 3\\
			-2 & -1 &  1 &  2 &  3 &  4 & 5  & ~ & 7\\
			-1 &  1 &  2 &  3 &  4 &  5 & 6  & ~ & 1\\
			 1 &  2 &  3 &  4 &  5 &  6 & 7  & ~ & 10\\
			 2 &  3 &  4 &  5 &  6 &  7 & 8  & ~ & 6\\
			 3 &  4 &  5 &  6 &  7 &  8 & 9  & ~ & 13\\
			 4 &  5 &  6 &  7 &  8 &  9 & 10 & ~ & 4\\
			$\vdots$ & $\vdots$ & $\vdots$ & $\vdots$ & $\vdots$ & $\vdots$ & $\vdots$ & ~ & $\vdots$
\end{tabular}

\vspace{1cm}

Notice that in both tables, there is no ``$0^{th}$'' positive integer, so 1 and -1 are adjacent.

From equation (\ref{Generalization}), position mappings $G_p$ analogue to $F$ can be derived. For the first table, for $3n+p, \; p \in \{1 + 6\N\}$, these are:

$G_7:\N \rightarrow 1+3\0 \cup 3+3\0$, such that
\begin{equation}
\label{G7}
\begin{cases}
		G_7(E_7^n([1+2m])) = [3+3m]|(m,n) \in \0^2, \\
		G_7(E_7^n([4+4m])) = [4+3m]|(m,n) \in \0^2, \\
		G_7(E_7^n([2]) = [1]|n \in \0
\end{cases}
\end{equation}

$G_{13}:\N \rightarrow 1+3\0 \cup 3+3\0$, such that
\begin{equation}
\label{G13}
\begin{cases}
		G_{13}(E_{13}^n([2+2m])) = [6+3m]|(m,n) \in \0^2, \\
		G_{13}(E_{13}^n([3+4m])) = [4+3m]|(m,n) \in \0^2, \\
		G_{13}(E_{13}^n([1]) = [1]|n \in \0\\
		G_{13}(E_{13}^n([5]) = [3]|n \in \0	
\end{cases}
\end{equation}

$G_{19}:\N \rightarrow 1+3\0 \cup 3+3\0$, such that
\begin{equation}
\label{G19}
\begin{cases}
		G_{19}(E_{19}^n([1+2m])) = [6+3m]|(m,n) \in \0^2, \\
		G_{19}(E_{19}^n([2+4m])) = [4+3m]|(m,n) \in \0^2, \\
		G_{19}(E_{19}^n([8]) = [1]|n \in \0\\
		G_{19}(E_{19}^n([4]) = [3]|n \in \0	
\end{cases}
\end{equation}

$G_{25}:\N \rightarrow 1+3\0 \cup 3+3\0$, such that
\begin{equation}
\label{G25}
\begin{cases}
		G_{25}(E_{25}^n([2+2m])) = [9+3m]|(m,n) \in \0^2, \\
		G_{25}(E_{25}^n([1+4m])) = [4+3m]|(m,n) \in \0^2, \\
		G_{25}(E_{25}^n([7]) = [1]|n \in \0\\
		G_{25}(E_{25}^n([3]) = [3]|n \in \0\\
		G_{25}(E_{25}^n([11]) = [6]|n \in \0
\end{cases}
\end{equation}

$G_{31}:\N \rightarrow 1+3\0 \cup 3+3\0$, such that
\begin{equation}
\label{G31}
\begin{cases}
		G_{31}(E_{31}^n([1+2m])) = [9+3m]|(m,n) \in \0^2, \\
		G_{31}(E_{31}^n([4+4m])) = [7+3m]|(m,n) \in \0^2, \\
		G_{31}(E_{31}^n([6]) = [1]|n \in \0\\
		G_{31}(E_{31}^n([2]) = [3]|n \in \0\\
		G_{31}(E_{31}^n([14]) = [4]|n \in \0\\
		G_{31}(E_{31}^n([10]) = [6]|n \in \0
\end{cases}
\end{equation}

$G_{37}:\N \rightarrow 1+3\0 \cup 3+3\0$, such that
\begin{equation}
\label{G37}
\begin{cases}
		G_{37}(E_{37}^n([2+2m])) = [12+3m]|(m,n) \in \0^2, \\
		G_{37}(E_{37}^n([3+4m])) = [7+3m]|(m,n) \in \0^2, \\
		G_{37}(E_{37}^n([5]) = [1]|n \in \0\\
		G_{37}(E_{37}^n([1]) = [3]|n \in \0\\
		G_{37}(E_{37}^n([13]) = [4]|n \in \0\\
		G_{37}(E_{37}^n([9]) = [6]|n \in \0\\
		G_{37}(E_{37}^n([17]) = [9]|n \in \0
\end{cases}
\end{equation}

The reader will notice the pattern here: every time the $i$ in each of the restricted domains $G_p([i+2m]), \; i\in\{1,2\}$ and $G_p([i+4m]), \; i\in\{1,2,3,4\}$ loops to become 2, respectively 4, it releases a new restricted domain. Because of this, it is no longer possible to make statements of the kind ``of any and every 2 consecutive elements of $\N$, exactly 1 is mapped through $F_1$'' or ``of any and every 4 consecutive elements of $\N$, exactly 1 is mapped through $F_2$''. Rather, $G_p, \; p\neq1$ consist of multiple restricted domains that are going to form their own period restricted domains. Consider for instance $G_{37}$ above: in the lowest 5 lines we recognize:
\begin{equation}
\begin{cases}
		G_{37}(E_{37}^n([1 + 8m]) = [3+3m]|m \in \{0,1,2\}, n \in \0\\
		G_{37}(E_{37}^n([5 + 8m]) = [1+3m]|m \in \{0,1\}, n \in \0\\
\end{cases}
\end{equation}
These map into the same range, but the interval of their domains is different. Also, if we continue each of these periodic subsets, we find equivalents: $[1,9,17]$ do not have a lower equivalent, but $[17+8=25=E_{37}(2)]$, while $[5, 13]$ do not have a lower equivalent, but $[13+8=21=E_{37}(1)]$.

For $3n+p, \; p \in \{-1 + 6\N\}$ we have:

$G_{-1}:\N \rightarrow 1+3\0 \cup 3+3\0$, such that
\begin{equation}
\label{G-1}
\begin{cases}
		G_{-1}(E_{-1}^n([1+2m])) = [1+3m]|(m,n) \in \0^2 \\
		G_{-1}(E_{-1}^n([4+4m])) = [3+3m]|(m,n) \in \0^2
\end{cases}
\end{equation}

$G_5:\N \rightarrow 1+3\0 \cup 3+3\0$, such that
\begin{equation}
\label{G5}
\begin{cases}
		G_5(E_5^n([2+2m])) = [4+3m]|(m,n) \in \0^2 \\
		G_5(E_5^n([3+4m])) = [3+3m]|(m,n) \in \0^2 \\
		G_5(E_5^n([1]) = [1]|n \in \0
\end{cases}
\end{equation}

$G_{11}:\N \rightarrow 1+3\0 \cup 3+3\0$, such that
\begin{equation}
\label{G11}
\begin{cases}
		G_{11}(E_{11}^n([1+2m])) = [4+3m]|(m,n) \in \0^2 \\
		G_{11}(E_{11}^n([2+4m])) = [3+3m]|(m,n) \in \0^2 \\
		G_{11}(E_{11}^n([4]) = [1]|n \in \0
\end{cases}
\end{equation}

$G_{17}:\N \rightarrow 1+3\0 \cup 3+3\0$, such that
\begin{equation}
\label{G17}
\begin{cases}
		G_{17}(E_{17}^n([2+2m])) = [7+3m]|(m,n) \in \0^2 \\
		G_{17}(E_{17}^n([1+4m])) = [3+3m]|(m,n) \in \0^2 \\
		G_{17}(E_{17}^n([3]) = [1]|n \in \0 \\
		G_{17}(E_{17}^n([7]) = [4]|n \in \0
\end{cases}
\end{equation}

$G_{23}:\N \rightarrow 1+3\0 \cup 3+3\0$, such that
\begin{equation}
\label{G23}
\begin{cases}
		G_{17}(E_{17}^n([1+2m])) = [7+3m]|(m,n) \in \0^2 \\
		G_{17}(E_{17}^n([4+4m])) = [6+3m]|(m,n) \in \0^2 \\
		G_{17}(E_{17}^n([2]) = [1]|n \in \0 \\
		G_{17}(E_{17}^n([6]) = [4]|n \in \0 \\
		G_{17}(E_{17}^n([10]) = [3]|n \in \0
\end{cases}
\end{equation}

We a pattern similar to that found before. In particular, notice that for $3n-1$ numbers, the cycle $\{3,4\}$ is not in strings and $[2=E(1)]$, while for $3n+5$ numbers the loop $\{1\}$, non-trivial here, is not in strings. It seems too much of a coincidence that seeming to have a string partition seems to coincide with the conjecture seeming to be true. How is this for $3n+p, \; p \in \{3 + 6\N\}$?

Although $3n+p, \; p \in \{3 + 6\N\}$ is usually overlooked, there is not really a reason to. The situation for these mappings is different, though, because for p=3 the range is $2+3\0$. For $p=3$, $q=(3-3)/2=0$, so $E_3: [\N \rightarrow 4\N]$ such that
\begin{equation}
\label{E3}
E_3([x])=4[x]
\end{equation}
Then,
$G_3:\N \rightarrow 2+3\0$, such that
\begin{equation}
\label{G3}
\begin{cases}
		G_3(E_3^n([1+2m])) = [2+3m]|(m,n) \in \0^2 \\
		G_3(E_3^n([2+4m])) = [2+3m]|(m,n) \in \0^2
\end{cases}
\end{equation}
It is tempting but inconsistent to let $E_3([x])=2[x]$ and write
\begin{equation}
		G_3(E_3^n([1+2m])) = [2+3m]|(m,n) \in \0^2 
\end{equation}
Yet we retain equations (\ref{E3}) and (\ref{G3}) and the following exposition clarifies why.

\begin{tabular}{rrrrrrrrrr}
		  3n+39 & 3n+33 & 3n+27 & 3n+21 & 3n+15 & 3n+9 & 3n+3 & \text{maps to} & x\\
			-6 & -5 & -4 & -3 & -2 & -1 & 1  & ~ & 2\\
			-5 & -4 & -3 & -2 & -1 &  1 & 2  & ~ & 2\\
			-4 & -3 & -2 & -1 &  1 &  2 & 3  & ~ & 5\\
		  -3 & -2 & -1 &  1 &  2 &  3 & 4  & ~ & 2\\
			-2 & -1 &  1 &  2 &  3 &  4 & 5  & ~ & 8\\
			-1 &  1 &  2 &  3 &  4 &  5 & 6  & ~ & 5\\
			 1 &  2 &  3 &  4 &  5 &  6 & 7  & ~ & 11\\
			 2 &  3 &  4 &  5 &  6 &  7 & 8  & ~ & 2\\
			 3 &  4 &  5 &  6 &  7 &  8 & 9  & ~ & 14\\
			 4 &  5 &  6 &  7 &  8 &  9 & 10 & ~ & 8\\
			$\vdots$ & $\vdots$ & $\vdots$ & $\vdots$ & $\vdots$ & $\vdots$ & $\vdots$ & ~ & $\vdots$
\end{tabular}

\vspace{1cm}

From these, we derive the following mappings:

$G_3:\N \rightarrow 2+3\0$, such that
\begin{equation}
\begin{cases}
		G_3(E_3^n([1+2m])) = [2+3m]|(m,n) \in \0^2 \\
		G_3(E_3^n([2+4m])) = [2+3m]|(m,n) \in \0^2
\end{cases}
\end{equation}

$G_9:\N \rightarrow 2+3\0$, such that
\begin{equation}
\label{G9}
\begin{cases}
		G_9(E_9^n([2+2m])) = [5+3m]|(m,n) \in \0^2 \\
		G_9(E_9^n([1+4m])) = [2+3m]|(m,n) \in \0^2 \\
		G_9(E_9^n([3])) = [2]|n \in \0
\end{cases}
\end{equation}

$G_{15}:\N \rightarrow 2+3\0$, such that
\begin{equation}
\label{G15}
\begin{cases}
		G_{15}(E_{15}^n([1+2m])) = [5+3m]|(m,n) \in \0^2 \\
		G_{15}(E_{15}^n([4+4m])) = [5+3m]|(m,n) \in \0^2 \\
		G_{15}(E_{15}^n([2])) = [2]|n \in \0 \\
		G_{15}(E_{15}^n([6])) = [2]|n \in \0
\end{cases}
\end{equation}

$G_{21}:\N \rightarrow 2+3\0$, such that
\begin{equation}
\label{G21}
\begin{cases}
		G_{21}(E_{21}^n([2+2m])) = [8+3m]|(m,n) \in \0^2 \\
		G_{21}(E_{21}^n([3+4m])) = [5+3m]|(m,n) \in \0^2 \\
		G_{21}(E_{21}^n([1])) = [2]|n \in \0 \\
		G_{21}(E_{21}^n([5])) = [2]|n \in \0
		G_{21}(E_{21}^n([9])) = [5]|n \in \0
\end{cases}
\end{equation}

$G_{27}:\N \rightarrow 2+3\0$, such that
\begin{equation}
\label{G27}
\begin{cases}
		G_{27}(E_{27}^n([1+2m])) = [8+3m]|(m,n) \in \0^2 \\
		G_{27}(E_{27}^n([2+4m])) = [5+3m]|(m,n) \in \0^2 \\
		G_{27}(E_{27}^n([4])) = [2]|n \in \0 \\
		G_{27}(E_{27}^n([8])) = [5]|n \in \0
		G_{27}(E_{27}^n([12])) = [2]|n \in \0 \\
\end{cases}
\end{equation}

$G_{33}:\N \rightarrow 2+3\0$, such that
\begin{equation}
\label{G33}
\begin{cases}
		G_{33}(E_{33}^n([2+2m])) = [11+3m]|(m,n) \in \0^2 \\
		G_{33}(E_{33}^n([1+4m])) = [5+3m]|(m,n) \in \0^2 \\
		G_{33}(E_{33}^n([3])) = [2]|n \in \0 \\
		G_{33}(E_{33}^n([7])) = [5]|n \in \0 \\
		G_{33}(E_{33}^n([11])) = [2]|n \in \0 \\
		G_{33}(E_{33}^n([15])) = [8]|n \in \0
\end{cases}
\end{equation}

As above, if we continue the newly formed restricted periodic subsets, we encounter equivalents. E.g., for $G_{33}$: if we continue $[3,7,11,15]$ we find $19=E_{33}(1)$: the periodicity is broken.

Because the mapping for $3n+3$ numbers (equation (\ref{G3})) is two-to-one, strings take a somewhat different shape (Figure 2). Every position that is not the head has a co-tail that is not mapped to, while every position that is mapped to, is mapped to exactly twice. I have simulated these strings for the first $10^9$ positions, and found no exception.

\begin{figure}[h]
\includegraphics[width=.6\textwidth]{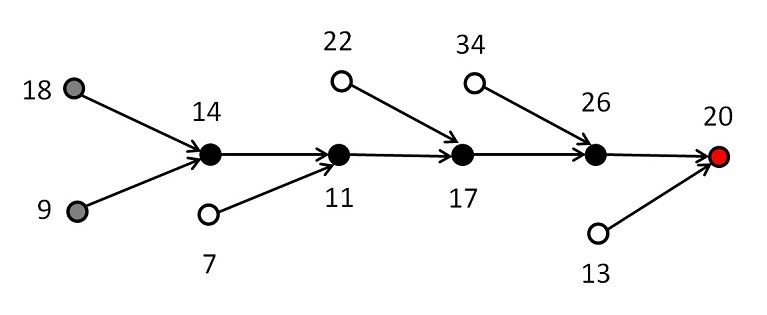}
\caption{Strings for $3n+3$ numbers: every position that is no a head has a co-tail, exactly half or double, while every position that is mapped to is mapped to exactly twice.}
\label{Figure 2}
\end{figure}

And yet, it seems that ``a Collatz conjecture for $3n+3$ numbers'' holds: every trajectory ends up in the `trivial-loop-plus-one' for $p=3$ (as simulated on Klaas IJntema's Collatz Calculation Center). Simultaneously, only the $3n+1$ and $3n+3$ numbers seem to have a string partition. Hence, I suggest that this perfect regularity for $3n+1$, which does not exist for other members of generalization $3n+p$, except for $3n+3$, for which the conjecture seems to be true too, could lead the way to a proof.

\section{References}
Lagarias, J.C. (2010) The 3x + 1 problem: an overview. In: The ultimate challenge: the 3x + 1 problem. Pages 3-29. American Mathematical Society, Providence, RI.
Pickover, C.A. (2009) The m$\alpha$th $\beta$ook. Sterling, New York.
Wirsching, G.J. (1998) The Dynamical System Generated by the 3n+1 Function. Springer.

\section{Appendix 1. Periodicity and sampling}
Sets and sequences can be \emph{periodic}. For sequences, this means that identical elements are found at fixed distances. For instance, $(1,2,1,2,\ldots)$ is a periodic sequence. For sets, this means that elements of a periodic set are found within $\N$ at a fixed distance. For instance, $\{11+64\0\}=\{11,75,139,\ldots\}$ is a periodic set. The fixed distance between consecutive elements, in the sequence itself for periodic sequence, in $\N$ for periodic sets, is defined to be the `interval', while the first (lowest) element of a periodic set or sequence is called the `intercept'.
\begin{definition}
\label{Interval}
An `interval' is the fixed distance between two consecutive identical elements in a periodic sequence, or the fixed distance in $\N$ between two consecutive elements of a periodic set.
\end{definition}
\begin{definition}
\label{Intercept}
An `intercept' is the first element of a periodic set or sequence.
\end{definition}
For instance, of the periodic set $\{11+64\0\}$, the intercept is 11 while the interval is 64. Likewise, the intercept of $\{188+64\0\}$ is 188, while the interval is 64. In some (but not all) cases in this paper the intervals are simply the modulus of some progression.

\begin{remark}
The periodicity here refers to periodicity in the \emph{ensemble}. Periodicity could also exist in \emph{time}. Indeed, the Collatz conjecture states that any natural number eventually yields the natural numbers $1,4,2,1,4,2,1,\ldots$: periodicity over time. On the other hand, if it is true that every second element of $\N$ has some property, then there is periodicity over the ensemble, in this case $\N$. This paper revolves around periodicity in the ensemble.
\end{remark}

\begin{remark}
There may be more than one interval in a periodic set or sequence (indeed, there may be infinitely many). For instance, in the sequence $(1,2,1,3,1,2,1,4,1,\ldots)$ there is an interval of 2 between elements 1, an interval of 4 between elements 2, and so forth.
\end{remark}

Given a sequence $(q_k)_{k=1}^{\infty}$, a sub-sequence could be sampled from $(q_k)_{k=1}^{\infty}$, taking elements at a fixed distance, henceforth called `period' $p$. If a sub-sequence is sampled with period $p$ from a periodic sequence or set, then there are two fixed distances involved: the interval of the periodic set or sequence, and the period with which a subset or sub-sequence is sampled. Hence the distinctive terminology:
\begin{definition}
\label{Period}
A `period' is a fixed step size by which elements are sampled from some set or sequence.
\end{definition}

Sampling from $(q_k)_{k=1}^{\infty}$ with period $p$ yields $(q_{kp+c})_{k=0}^{\infty}$, $q_c$ being the first element of $(q_k)_{k=1}^{\infty}$ that is sampled. If $(q_k)_{k=1}^{\infty}$ is a periodic sequence with interval(s) co-prime with the sampling period, what would be the result?

\begin{lemma}
\label{Coprime}
Let $(q_k)_{k=1}^{\infty}$ be a periodic sequence in which the intervals are positive integer powers of $r \in \N$, so $r^1, r^2, \ldots$. Let $p$ be a sampling period co-prime with the interval(s) of $(q_k)_{k=1}^{\infty}$. Then the interval(s) of $(q_{kp+c})_{k=0}^{\infty}$ are the same as the interval(s) in $(q_k)_{k=1}^{\infty}$.
\end{lemma}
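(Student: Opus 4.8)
The plan is to prove the lemma one value at a time. By Definition~\ref{Interval} and the remark following it, the interval(s) of a periodic sequence are precisely the common differences of the arithmetic progressions formed by the positions at which each fixed value occurs. So it suffices to show: for every value $v$ occurring in $(q_k)_{k=1}^{\infty}$, the set of sampled indices $\{k : q_{kp+c}=v\}$ is again an arithmetic progression, and its common difference equals the interval of $v$ in $(q_k)_{k=1}^{\infty}$. Granting this for all $v$, the sampled subsequence $(q_{kp+c})_{k=0}^{\infty}$ has the same set of values as $(q_k)_{k=1}^{\infty}$ (each value recurs infinitely often along its progression, hence survives the sampling), and the interval attached to each value is unchanged; therefore the collection of interval(s) is literally the same set, which is the desired conclusion.

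First I would fix a value $v$ and write its position set in the original sequence as an arithmetic progression $\{a_v + d_v\, \mathbb{N}_0\}$, where $d_v$ is the interval of $v$ and, by the hypothesis that all intervals are positive integer powers of $r$, we have $d_v = r^{\,i_v}$ for some $i_v \in \N$. Then $q_{kp+c}=v$ holds exactly when $kp+c \equiv a_v \pmod{d_v}$, i.e. when $kp \equiv a_v-c \pmod{d_v}$.

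The key step is the following elementary observation. Since $p$ is co-prime with $r$ (which is itself the smallest interval of $(q_k)_{k=1}^{\infty}$), it is co-prime with every power $r^{\,i}$, and in particular with $d_v$. Hence $p$ is a unit modulo $d_v$, so $k \mapsto kp$ is a bijection on $\mathbb{Z}/d_v\mathbb{Z}$, and the congruence $kp \equiv a_v-c \pmod{d_v}$ has a unique solution class $k \equiv p^{-1}(a_v-c) \pmod{d_v}$. Consequently $\{k : q_{kp+c}=v\}$ is the arithmetic progression whose first element is the least non-negative such $k$ and whose common difference is $d_v$ — the same interval as in $(q_k)_{k=1}^{\infty}$. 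Running this over all values $v$ finishes the proof.

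The step I expect to require the most care — though it is bookkeeping rather than a genuine obstacle — is making precise the structural hypothesis: ``periodic with intervals powers of $r$'' is to be read as asserting that each value sits on a single arithmetic progression of positions whose common difference is a power of $r$. This matches Definition~\ref{Interval} and the ruler-type sequences actually used in the body (such as the sequence recording which restriction $F_z$ maps each element of $[\N]$). Once that reading is fixed, everything reduces to the fact that an invertible affine substitution $k \mapsto kp+c$ modulo $d_v$ sends a residue class to a residue class, so arithmetic progressions of difference $d_v$ are preserved. If one wishes to allow a finite non-periodic initial segment, the same argument applies verbatim to the eventual periodic tail, and the finitely many exceptional early terms do not affect the interval(s).
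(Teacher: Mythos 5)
Your proof is correct and follows essentially the same route as the paper: both arguments come down to the fact that, since $p$ is coprime to each interval $d_v=r^{i_v}$, the sampled occurrences of any fixed value recur exactly every $d_v$ samples. If anything, your write-up is slightly tighter --- you allow an arbitrary offset $c$ and obtain exactness from the uniqueness of the residue class $k\equiv p^{-1}(a_v-c)\pmod{d_v}$, whereas the paper starts the sampling at an occurrence $q_n$ of the value and settles the ``no closer repeats'' direction only with its brief ``by exclusion'' remark.
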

\begin{proof}
Take some element $q_n$ of $(q_k)_{k=1}^{\infty}$. There are identical elements at distance $r^m$ for some $m \in \N$: $q_n, q_{n+r^m}, q_{n+2r^m}, \ldots$. Sampling with period $p$ starting at $q_n$ means that the elements $q_n, q_{n+p}, q_{n+2p}, \ldots$ are sampled. Where the sequences $q_n, q_{n+r^m}, q_{n+2r^m}, \ldots$ and $q_n, q_{n+p}, q_{n+2p}, \ldots$ intersect, i.e., at $q_n,q_{n+r^mp},q_{n+2r^mp},q_{n+3r^mp},\ldots$, identical elements are sampled. This means that $r^m$ steps of size $p$ are taken between two sampled identical elements. Since $(q_{kp+c})_{k=0}^{\infty}$ is sampled with step sizes $p$, in $(q_{kp+c})_{k=0}^{\infty}$ the interval between two consecutive identical elements is $r^m$, just as in the original sequence $(q_{k})_{k=0}^{\infty}$. Finally, because the interval between two identical elements is the same as in the original sequence, by exclusion (\emph{something} is sampled at the interjacent points), the interval between all two consecutive identical elements is as in the original sequence. This completes the proof.
\end{proof}

\section{Appendix 2. The proof of $F$.}
The accelerated Collatz map, that sends odd positive integers to odd positive integers, is:
\begin{equation}
\widetilde{C}(n):=\frac{3n+1}{2^j},
\label{AccelCollatz}
\end{equation}
where $2^j$ is the largest power of 2 that divides $3n+1$, with $n \in \D$, where $\D=1,3,5,\ldots$, the odd positive integers. Hence, I use the transformation $g: \D \rightarrow \N$, $g$ being defined as
\begin{equation}
g(n):=\frac{n+1}{2}.
\label{g}
\end{equation}

Conjugating $\widetilde{C}$ through $g$ yields mapping
\begin{equation}
F([x])=g(\widetilde{C}(g^{-1}([x]))),
\end{equation}
which is $F:\N \rightarrow 1+3\0 \cup 3+3\0$, such that
\begin{equation}
\begin{cases}
\label{F}
F(E^n([2+2m]))=[3+3m]|(m,n) \in \0^2, \\
F(E^n([1+4m]))=[1+3m]|(m,n) \in \0^2,
\end{cases}
\end{equation}
Here, $\0=0 \cup \N$ and $E: \N \rightarrow 3+4\0$ such that
\begin{eqnarray}
\label{E}
E([x]):&=&4[x]-1, \\
\label{E0}
E^0([x])&=& [x], \\
\label{En}
E^n([x])&=&E(E^{n-1}([x])).
\end{eqnarray}
The derivation of this mapping and the facts that the domain of $F$ is $[\N]$ while the range is $[1+3\0 \cup 3+3\0]$ are now proven in the following Lemmas (\ref{F1}) through (\ref{Domain}).

\begin{lemma}
\label{F1}
For $m \geq 0$,
\begin{equation}
F([2+2m])=[3+3m]
\end{equation}
\end{lemma}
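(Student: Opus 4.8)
The plan is to unwind the definition $F([x]) = g(\widetilde{C}(g^{-1}([x])))$ directly at $[x] = [2+2m]$ and carry the computation through in three short steps. First I would apply $g^{-1}$: by equation (\ref{g-1}), $g^{-1}([2+2m]) = 2(2+2m) - 1 = 3 + 4m =: n$, which is the odd positive integer corresponding to the enumerated position $[2+2m]$.

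Next I would apply $\widetilde{C}$ to this $n$. Computing $3n+1 = 3(3+4m)+1 = 10 + 12m = 2(5+6m)$. The one point that needs a word of justification is the exponent $j$ in equation (\ref{AccelCollatz}): since $5 + 6m$ is odd, $2^1$ is exactly the largest power of $2$ dividing $3n+1$, so $j = 1$. Equivalently, $n = 3 + 4m \equiv 3 \pmod 4$ forces $3n+1 \equiv 10 \equiv 2 \pmod 4$. Hence $\widetilde{C}(n) = (3n+1)/2 = 5 + 6m$.

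Finally I would apply $g$ from equation (\ref{g}): $g(5+6m) = (5+6m+1)/2 = (6+6m)/2 = 3 + 3m$, so $F([2+2m]) = [3+3m]$, as claimed.

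There is no real obstacle here — the only thing to be careful about is the claim $j=1$, i.e. that $3n+1$ is divisible by $2$ but not by $4$ when $n \equiv 3 \pmod 4$; everything else is routine arithmetic. This is exactly the base case of the first branch of (\ref{F}) (the case $n = 0$ of $E^n$), and the companion lemma for $[1+4m]$ as well as the $E^n$-invariance will presumably be handled the same way in the lemmas that follow.
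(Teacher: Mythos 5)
Your proof is correct and follows essentially the same route as the paper's: translate $[2+2m]$ to $n=3+4m$ via $g^{-1}$, run the Collatz steps $3+4m \mapsto 10+12m \mapsto 5+6m$, observe the result is odd (your explicit check that $j=1$ just makes this slightly more careful than the paper's phrasing), and translate back through $g$ to get $[3+3m]$. No differences worth noting.
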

\begin{proof}
The enumerated odd natural number $[x = 2+2m]$ corresponds to odd natural number $n=3+4m$. The Collatz iteration gives $3+4m \mapsto 10+12m \mapsto 5+6m$. This number is odd and corresponds to enumerated odd natural number $[3+3m]$.
\end{proof}

\begin{lemma}
\label{F2}
For $m \geq 0$,
\begin{equation}
F([1+4m])=[1+3m]
\end{equation}
\end{lemma}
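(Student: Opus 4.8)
The plan is to mimic the proof of Lemma \ref{F1} verbatim, the only new feature being that on this branch the accelerated map divides out $2^2$ rather than $2^1$. First I would translate the enumerated integer $[1+4m]$ back into the odd positive integer it names: by equation (\ref{g-1}), $g^{-1}([1+4m]) = 2(1+4m)-1 = 8m+1$. This is the number on which $\widetilde{C}$ must act.

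Next I would run one step of $\widetilde{C}$. Since $8m+1$ is odd, the odd branch of the Collatz map applies and $3(8m+1)+1 = 24m+4 = 4(6m+1)$. The cofactor $6m+1$ is odd for every $m \in \0$, so the largest power of $2$ dividing $3(8m+1)+1$ is exactly $2^2$; hence $\widetilde{C}(8m+1) = 6m+1$. Finally I would push the result back through $g$: $6m+1$ is odd, and by equation (\ref{g}), $g(6m+1) = (6m+1+1)/2 = 3m+1$. Composing, $F([1+4m]) = g\bigl(\widetilde{C}(g^{-1}([1+4m]))\bigr) = g\bigl(\widetilde{C}(8m+1)\bigr) = g(6m+1) = [1+3m]$, as claimed.

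The hard part will be essentially nonexistent: every step is a one-line arithmetic identity. The single point that deserves explicit mention — and the only genuine difference from Lemma \ref{F1} — is the verification that the $2$-adic valuation of $3(8m+1)+1$ is exactly $2$, i.e. that $6m+1$ is odd, so that the one application of $\widetilde{C}$ lands directly on $6m+1$ and not on a further halving; this is immediate since $6m$ is even. I would also note in passing that this is precisely why the class $[1+4m]$, rather than all odd enumerated integers, is the correct domain of this branch of $F$: the elements $[3+4m] = E([\N])$ are exactly those whose corresponding $3n+1$ value carries a higher power of $2$, making them ``equivalent'' (in the sense of $E$) to a lower enumerated integer and hence redundant in the defining equations (\ref{F}) for $F$.
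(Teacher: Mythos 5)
Your proposal is correct and follows essentially the same route as the paper's proof: translate $[1+4m]$ to the odd integer $8m+1$ via $g^{-1}$, apply one accelerated Collatz step $8m+1 \mapsto 24m+4 \mapsto 12m+2 \mapsto 6m+1$ (equivalently, divide out exactly $2^2$ since $6m+1$ is odd), and map back through $g$ to get $[1+3m]$. Your explicit check that the $2$-adic valuation is exactly $2$ is the same content the paper conveys by writing out the two halvings and noting the result is odd, so there is no substantive difference.
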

\begin{proof}
The enumerated odd natural number $[x = 1+4m]$ corresponds to odd natural number $n=1+8m$. The Collatz iteration gives $1+8m \mapsto 4+24m \mapsto 2+12m \mapsto 1+6m$. This number is odd and corresponds to enumerated odd natural number $[1+3m]$.
\end{proof}

\begin{lemma}
\label{Ex}
The mapping (for $x \geq 1$)
\begin{equation}
E([x]):=[4x-1]
\end{equation}
has the property that
\begin{equation}
F(E([x]))=F([x]).
\end{equation}
\end{lemma}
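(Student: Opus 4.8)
The plan is to bypass $F$ entirely and work with the accelerated Collatz map $\widetilde{C}$. Since $F=g\circ\widetilde{C}\circ g^{-1}$ and $g$ is a bijection, proving $F(E([x]))=F([x])$ is equivalent to proving $\widetilde{C}\big(g^{-1}(E([x]))\big)=\widetilde{C}\big(g^{-1}([x])\big)$; so the whole statement collapses to an identity about the accelerated map on odd integers.

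First I would translate $E$ back into the language of odd integers. Writing $n:=g^{-1}([x])=2x-1$ for the odd integer that $[x]$ represents, a direct substitution gives $g^{-1}(E([x]))=g^{-1}([4x-1])=2(4x-1)-1=8x-3=4(2x-1)+1=4n+1$. Hence, on odd integers, $E$ is nothing but the map $n\mapsto 4n+1$, and the claim reduces to showing $\widetilde{C}(4n+1)=\widetilde{C}(n)$ for every odd $n\geq 1$.

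Next I would verify this identity by tracking the power of $2$. Since $3(4n+1)+1=12n+4=4(3n+1)$ and $n$ is odd, $3n+1$ is even; writing $3n+1=2^{j}u$ with $u$ odd and $j\geq 1$, the largest power of $2$ dividing $4(3n+1)$ is exactly $2^{j+2}$. Therefore
\[
\widetilde{C}(4n+1)=\frac{4(3n+1)}{2^{j+2}}=\frac{3n+1}{2^{j}}=\widetilde{C}(n).
\]
Applying $g$ to both sides and using $F=g\circ\widetilde{C}\circ g^{-1}$ then yields $F(E([x]))=F([x])$, which is the assertion.

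I do not anticipate a real obstacle: the argument is a short computation. The only point needing care is the bookkeeping between an enumerated integer $[x]$ and the odd integer $2x-1$ it stands for, and in particular checking that multiplying $3n+1$ by $4$ raises its $2$-adic valuation by exactly $2$ — that is precisely what makes the two accelerated steps coincide. An alternative, if one prefers to stay inside $[\N]$, is to use the partition of $[\N]$ into the equivalence classes $\{E^{n}([2+2m])\}$ and $\{E^{n}([1+4m])\}$ together with the formula~(\ref{F}): then $E$ merely increments the exponent $n$ while leaving $m$, and hence $F([x])$, untouched. Since that route presupposes the domain description established later in the Appendix, I would present the direct $\widetilde{C}$-computation as the cleaner, self-contained proof.
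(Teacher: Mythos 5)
Your proof is correct and is essentially the paper's own argument: the paper likewise translates $[x]$ and $E([x])$ back to the odd integers $2x-1$ and $8x-3=4(2x-1)+1$ and checks that their Collatz trajectories merge (at $6x-2$), which is exactly your observation that $3(4n+1)+1=4(3n+1)$ raises the 2-adic valuation by two so the accelerated images coincide. The only difference is presentational — explicit valuation bookkeeping versus step-by-step trajectory matching — so no substantive comparison is needed.
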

\begin{proof}
Enumerated odd natural number $[x]$ corresponds to the odd natural number $n=2x-1$, which has the Collatz map image
\begin{equation}
2x-1 \mapsto 6x-2 \mapsto 3x-1 \; (a number).
\end{equation}
For $x \geq 1$ the enumerated odd natural number $[4x-1]$ corresponds to the odd natural number $8x-3$, which has the Collatz map image 
\begin{equation}
8x-3 \mapsto 24x-8 \mapsto 12x-4 \mapsto 6x-2 \mapsto 3x-1 \; (a number).
\end{equation}
As the Collatz map images match up to this point, the next odd number in both iterations is the same, and Lemma (\ref{E}) follows.
\end{proof}
Since equations (\ref{E0}) and (\ref{En}) generalize $E([x])$ (equation (\ref{E})), Lemmas \ref{F1}, \ref{F2} and \ref{Ex} together give $F$.

I now prove that $F: [\N \rightarrow 1+3\0 \cup 3+3\0]$.
\begin{lemma}
\label{Range}
$F$ does not map to $[2+3\0]$.
\end{lemma}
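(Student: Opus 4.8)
The plan is to reduce the claim to an elementary divisibility fact about the accelerated Collatz map $\widetilde{C}$, rather than reading it off the explicit domain decomposition in equation~(\ref{F}); that decomposition is essentially the content of the companion Lemma~(\ref{Domain}), so invoking it here would be circular.

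First I would unwind the definition $F([x]) = g(\widetilde{C}(g^{-1}([x])))$. Writing $n = g^{-1}([x]) = 2[x]-1$ (an odd positive integer) and using $g(m) = (m+1)/2$, one gets $F([x]) = (\widetilde{C}(n)+1)/2$. The target set $[2+3\0]$ consists precisely of enumerated integers $\equiv 2 \pmod 3$, so $F([x]) \in [2+3\0]$ would force $\widetilde{C}(n)+1 \equiv 4 \pmod 3$, i.e. $\widetilde{C}(n) \equiv 0 \pmod 3$.

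Next I would observe that $\widetilde{C}(n)$ is never divisible by $3$: by definition $\widetilde{C}(n) = (3n+1)/2^j$ with $2^j$ the largest power of $2$ dividing $3n+1$, and since $3n+1 \equiv 1 \pmod 3$ while $\gcd(3,2^j)=1$, the quotient is coprime to $3$ (indeed $\widetilde{C}(n) \equiv 2^j \equiv \pm 1 \pmod 3$). This contradicts the congruence $\widetilde{C}(n) \equiv 0 \pmod 3$ derived above, so no $[x]$ can satisfy $F([x]) \in [2+3\0]$, which is the claim.

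The argument has essentially no obstacle; the only point needing care is the bookkeeping between the enumerated space $[\N]$ and the odd-integer space $\D$ — specifically, checking that $[2+3\0]$ pulls back under $g^{-1}$ to exactly the odd multiples of $3$ (namely $2(2+3m)-1 = 3(2m+1)$), which is what makes the mod-$3$ invariant of $\widetilde{C}$ the right tool. I would also flag, so the reader is not misled, that this lemma together with Lemmas~(\ref{F1})--(\ref{Ex}) does \emph{not} yet establish that the range equals $[1+3\0 \cup 3+3\0]$: it only excludes $[2+3\0]$, with surjectivity onto the remaining residue classes handled separately.
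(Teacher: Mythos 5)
Your proof is correct and is essentially the paper's own argument: both reduce the claim to the mod-$3$ observation that $g^{-1}([2+3\0])$ consists of odd multiples of $3$, which can never arise from the accelerated map since $3n+1\equiv 1 \pmod 3$ and dividing out powers of $2$ preserves coprimality to $3$. No substantive difference beyond working forward from $\widetilde{C}(n)=(3n+1)/2^j$ rather than backward through the halvings.
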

\begin{proof}
Enumerated odd natural numbers $[2+3\0]$ correspond to odd natural numbers $[3+6\0]$. These are multiples of 3, i.e., their residual in modulus 3 equals 0. If these odd natural numbers are multiplied by 2 an arbitrary number of times, the resulting even natural numbers are still in $[3+6\0]$ (and the modulus 3 residual still 0). No multiple of 3 can be obtained by multiplying a number by 3 and adding 1, and Lemma (\ref{Range}) follows.
\end{proof}

Lemma \ref{Range} refers to the range of $F$. $[\N]$ has a partition $[1+3\0 \cup 2+3\0 \cup 3+3\0]$, seen immediately from the fact that $1,2,3$ are the residual classes modulo 3. It turns out that of every and any three consecutive elements of $[\N]$, exactly two are in the range of $F$.

As for the domain of $F$, if it can be proven that there exists a partition of $[\N]$ identical to the domain of $F$ (equation (\ref{F})), i.e. $[E^{\0}(2+2\0) \cup E^{\0}(1+4\0)]$, $E$ defined in equations (\ref{E}-\ref{En}), then too it is proven that the domain of $F$ is $[\N]$. This assures that not only the mapping $F$ indeed follows from the Collatz map, but that \emph{all} Collatz maps of the natural numbers are represented in $F$. Hence the following Lemma.

\begin{lemma}
\label{Domain}
$[\N]$ has a partition $[E^{\0}(2+2\0) \cup E^{\0}(1+4\0)]$, $E$ defined in equations (\ref{E}-\ref{En}).
\end{lemma}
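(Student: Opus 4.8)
The plan is to prove the stronger statement that the sets $E^n(2+2\0)$ and $E^n(1+4\0)$, over all $n \in \0$, are pairwise disjoint and together exhaust $[\N]$; the two-block partition into $E^{\0}(2+2\0)$ and $E^{\0}(1+4\0)$ then follows at once. Everything rests on one structural fact about $E([x]) = [4x-1]$: it is injective and its range is exactly $[3+4\0]$. Hence each iterate $E^n$ (for $n \geq 1$) is injective with image inside $[3+4\0]$, and, crucially, $[x \in \N]$ lies in the range of $E$ \emph{if and only if} $[x] \equiv 3 \pmod 4$. I would also record that $[2+2\0]$ (the even elements), $[1+4\0]$ and $[3+4\0]$ partition $[\N]$ --- they are the residue classes $\{0,2\}$, $\{1\}$, $\{3\}$ modulo $4$ --- so that $[2+2\0 \cup 1+4\0] = [\N \setminus (3+4\0)]$ and no element of $[2+2\0 \cup 1+4\0]$ lies in the range of $E$.

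Next I would run a finite descent on an arbitrary $[x \in \N]$: if $[x] \notin [3+4\0]$, stop; otherwise write $[x] = E([w])$ with $[w] = [(x+1)/4] < [x]$ and repeat with $[w]$. Since each step strictly decreases a positive integer, after finitely many steps --- say $n$ of them --- the process halts at some $[z] \in 2+2\0 \cup 1+4\0$ with $[x] = E^n([z])$. This gives the inclusion $[\N] \subseteq [E^{\0}(2+2\0) \cup E^{\0}(1+4\0)]$, and the reverse inclusion is immediate since $E$ maps $[\N]$ into $[\N]$.

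For disjointness I would argue that the pair $(n,[z])$ produced by the descent is unique: at each step $[x]$ is in the range of $E$ precisely when $[x] \in [3+4\0]$, and then $[w] = [(x+1)/4]$ is its only $E$-preimage, so the descent is forced at every stage; hence $[x]$ can stop at only one level $n$ and at only one base point $[z]$. Consequently, if $[x] \in E^n(S) \cap E^{n'}(S')$ with $S, S' \in \{[2+2\0],[1+4\0]\}$, then $n = n'$, and injectivity of $E^n$ forces the two preimages in $S$ and $S'$ to coincide, whence $S = S'$ because $[2+2\0] \cap [1+4\0] = \emptyset$. This is precisely the claimed partition.

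I expect the only point needing genuine care to be this uniqueness-of-descent step --- in essence the observation that $E$ hits nothing outside $[3+4\0]$, which is exactly what makes the ``first exceptional level'' $n$ and the base point $[z]$ well defined and prevents the pieces at distinct levels from overlapping. Everything else is routine: arithmetic modulo $4$, injectivity of $E$, and the fact that a strictly decreasing sequence of positive integers must terminate.
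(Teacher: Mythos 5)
Your argument is correct and follows essentially the same route as the paper's own proof: a descent that repeatedly takes the unique $E$-preimage of any element congruent to $3 \pmod 4$ until an element of $[2+2\0 \cup 1+4\0]$ is reached, terminating because the value strictly decreases. The only difference is that you make the disjointness part (uniqueness of the level $n$ and of the base point) explicit, which the paper leaves implicit; that is a tightening of the same idea rather than a different approach.
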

\begin{proof}
The map $[x] \rightarrow [4x-1]$ has as output a position that is 3 (mod 4). Every integer 3 (mod 4) has a unique preimage under $E(\cdot)$. If this preimage is 3 (mod 4), take the preimage of it. This can be continued until an element not 3 (mod 4) is reached. Since the position decreases at each step, halting will happen. This completes the proof.
\end{proof}

\end{document}